\newtheorem{remark}{Remark}[section]
\newtheorem{algorithm}{Weak Galerkin Algorithm}
\newtheorem{model-problem}{Problem}
\newcommand{\bu}{\textbf{u}}
\newcommand{\bw}{\textbf{w}}
\newcommand{\bx}{\textbf{x}}
\newcommand{\be}{\textbf{e}}
\newcommand{\bv}{\textbf{v}}
\newcommand{\bH}{{\bf H}}
\newcommand{\curl}{{\nabla\times}}
\def\T{{\mathcal T}}
\def\E{{\mathcal E}}
\def\pT{{\partial T}}
\def\jump#1{{[\![#1[\!]}}
\def\bn{{\bf n}}
\def\bV{{\mathbf V}}
\def\bH{{\mathbf H}}
\def\3bar{{|\hspace{-.02in}|\hspace{-.02in}|}}
\def\leqC{\lesssim}
\def\Hmudiv#1{{H({\rm div}_ \varepsilon; #1)}}
\def\Hzmudiv#1{{H_0({\rm div}_ \varepsilon; #1)}}
\def\bE{\textbf{E}}
\def\bH{\textbf{H}}
\begin{document}

\setlength{\parindent}{0.25in} \setlength{\parskip}{0.08in}

\title{New Discretization Schemes for Time-Harmonic Maxwell Equations by Weak
Galerkin Finite Element Methods}

\author{
Chunmei Wang\thanks{Department of Mathematics, Texas State
University, San Marcos, TX 78666, USA. The research of Chunmei Wang
was partially supported by National Science Foundation Award
DMS-1522586, National Natural Science Foundation of China Award
\#11526113, Jiangsu Key Lab for NSLSCS Grant \#201602, and by
Jiangsu Provincial Foundation Award \#BK20050538.}}

\maketitle

\begin{abstract}
This paper introduces new discretization schemes for time-harmonic
Maxwell equations in a connected domain by using the weak Galerkin
(WG) finite element method. The corresponding WG algorithms are
analyzed for their stability and convergence. Error estimates of
optimal order in various discrete Sobolev norms are established for
the resulting finite element approximations.

\end{abstract}

\begin{keywords} weak Galerkin, finite element methods,  time-harmonic, Maxwell equations,
weak divergence, weak curl,  connected domains, polygonal/polyhedral meshes.
\end{keywords}

\begin{AMS}
Primary 65N30, 65N12, 65N15; Secondary 35Q60, 35B45.
\end{AMS}

\section{Introduction}
This paper is concerned with new developments of numerical methods
for time-harmonic Maxwell equations. The time-harmonic Maxwell
equations are coupled magnetic and electric equations given by
\begin{equation}\label{maxeqn}
 \begin{split}
 \nabla\times \textbf{E} =&- \frac{\partial \textbf{B}}{\partial t},\qquad \quad \ \text{ in}\ \Omega,\\
\nabla \times  \textbf{H}=& \frac{\partial \textbf{D}}{\partial t}+ \textbf{j},\qquad  \quad \text{ in}\ \Omega,\\
\nabla\cdot\textbf{ D}=&\rho,\qquad \qquad \qquad \text{in}\ \Omega,\\
\nabla\cdot\textbf{ B} =&0,\qquad \qquad \qquad \text{in}\ \Omega,
 \end{split}
\end{equation}
with the constitutive relations:
$$
\textbf{ B}=  \mu \textbf{ H}, \textbf{ j}=\sigma \textbf{ E}+\textbf{ j}_e, \textbf{ D}=\varepsilon\textbf{E},
$$
where $\Omega$ is an open bounded and connected domain in
$\mathbb{R}^d (d=2, 3) $ with a Lipschitz continuous boundary
$\Gamma=\partial\Omega$. Here, $\textbf{ E}$  is the electric field
intensity, $\textbf{ B}$ is the magnetic flux density, $\textbf{H}$
is the magnetic field intensity, $\textbf{ D}$ is the electric
displacement flux density, $\textbf{ j}$ is the electric current
density, $\mu=\{\mu_{ij}(\bx)\}_{d\times d}$ is called permeability,
$\rho$ is the charge density, $\textbf{j}_e$ is the external current
density, $\sigma$ is real-valued and is known as the electric
conductivity, and $\varepsilon=\{\varepsilon_{ij}(\bx)\}_{d\times
d}$ is the material parameter, and is called permittivity.
Additionally, $\mu$, $\varepsilon$ are real-valued, symmetric,
uniformly positive definite matrices in the domain $\Omega$. We
assume that $\mu$, $\varepsilon$ and $\sigma$ are piecewise smooth
functions in the domain $\Omega$.

For time-harmonic fields, where the time dependence is assumed to be
harmonic, i.e., $\text{exp}(i \omega t)$, using the constitutive
relations, the maxwell equations (\ref{maxeqn}) can be rewritten for
the Fourier transform of the fields as (see \cite{cai2013} for
details)
\begin{eqnarray}\label{time-harmonic-ME-01}
 \nabla\times \textbf{E} & = & - i\omega  \mu \textbf{H},\qquad\qquad\quad  \text{ in}\ \Omega,\\
 \nabla\times \textbf{H}& = & i\omega \varepsilon \textbf{E} +\sigma \textbf{E} +\textbf{j}_e,\
 \quad \text{ in}\ \Omega,\label{time-harmonic-ME-02}\\
\nabla\cdot (\varepsilon \textbf{E}) & = & \rho,\qquad\qquad\qquad\qquad \text{in}\ \Omega,\label{time-harmonic-ME-03}\\
\nabla\cdot ( \mu\textbf{H}) & = & 0,\qquad\qquad\qquad\qquad \text{in}\ \Omega,\label{time-harmonic-ME-04}
\end{eqnarray}
where $\omega $ is a constant in the domain $\Omega$.

In the past several decades, the Maxwell equations have been
extensively investigated by many researchers. H(curl) conforming
finite element method was first  introduced by J. N\'{e}d\'{e}lec
 \cite{n1980} and was further developed by P. Monk
\cite{m2003}. Houston, Perugia and Schotzau \cite{hps2002,
hps2003, hps2004, ps2003, psm2002} have developed
  discontinuous Galerkin (DG) finite element methods for
  the Maxwell  equations.  Particularly in
\cite{hps2004}, a mixed DG formulation for the Maxwell equations
was introduced and analyzed.   Recently, a
weakly over-penalized symmetric interior penalty method
\cite{bls2008} has been introduced and analyzed by S. Brenner, F.
Li and L. Sung. There are also many other numerical methods developed to
discretize the Maxwell equations.

Recently, WG method is emerging as an efficient finite element
technique for partial differential equations. The WG finite element
method was first introduced in \cite{wy1202, wy2013} for second
order elliptic equations  and the idea was subsequently further
developed for several other model PDEs \cite{ mwy1204, cwang1,
cwang2, cwang3, cwang4, wy1302}. The key idea of WG method is to use
weak functions and their corresponding discrete weak derivatives in
existing variational forms. WG method is highly flexible and robust
by allowing the use of discontinuous piecewise polynomials and
finite element partitions with arbitrary shape of
polygons/polyhedra, and the method is parameter free and absolutely
stable. WG finite element method has been applied to time-harmonic
Maxwell equations in \cite{mwyz}, yielding a numerical method that
has optimal order of convergence in certain discrete norms.

The goal of this paper is to present a new WG finite element method
for the time-harmonic Maxwell equations
(\ref{time-harmonic-ME-01})-(\ref{time-harmonic-ME-04}) in a
connected domain with heterogeneous media, which covers more cases
compared with the model problem considered in \cite{mwyz}. In
particular, we formulate the time-harmonic Maxwell equations
(\ref{time-harmonic-ME-01})-(\ref{time-harmonic-ME-04}) into two
variational problems with complex coefficients; see (\ref{maxwell1})
and (\ref{maxwell2}) for details. Each of the variational problems
is then discretized by using the weak Galerkin finite element
method. The main difficulty in the design of numerical methods for
(\ref{maxwell1}) and (\ref{maxwell2}) lies in the fact that the
terms $\nabla\cdot( \varepsilon \textbf{E})$ and $\nabla\cdot(\mu
\textbf{H})$ require the continuity of $\varepsilon \textbf{E}$ and
$\mu \textbf{H}$ in the normal direction of all interior interfaces,
respectively. Consequently, the usual $H(div)$ or $H(curl)$
conforming elements are not applicable in this practice. This paper
shows that the weak Galerkin finite element method offers an ideal
solution, as the continuity can be relaxed by a weak continuity
implemented through a carefully chosen stabilizer.

The paper is organized as follows. In Section \ref{Section:sec3}, we
shall derive two variational problems: one for the electric field
intensity and the other for the magnetic field intensity. These
variational problems form the basis of the weak Galerkin finite
element methods of this paper. In Section \ref{Section:sec4}, we
shall briefly review the discrete weak divergence and the discrete
weak curl operators which are necessary in weak Galerkin. In Section
\ref{Section:sec5}, we describe how the weak Galerkin finite element
algorithms are formulated. Section \ref{Section:sec6} is devoted to
a verification of some stability conditions for the resulting WG
algorithms. In particular, it is shown in this section that the WG
algorithms have one and only one solution. In Section
\ref{Section:sec7}, we derive some error equations for our WG
algorithms. Finally in Section \ref{Section:sec8}, we establish some
optimal order error estimates for the WG finite element
approximations.

Throughout the paper, we will follow the usual notations for Sobolev
spaces and norms \cite{ciarlet}. For any open bounded domain
$D\subset \mathbb{R}^d (d=2, 3)$ with Lipschitz continuous boundary, we use
$\|\cdot\|_{s,D}$ and $|\cdot|_{s,D}$ to denote the norm and
seminorm in the Sobolev space $H^s(D)$ for any $s\ge 0$,
respectively. The inner product in $H^s(D)$ is denoted by
$(\cdot,\cdot)_{s,D}$. The space $H^0(D)$ coincides with $L^2(D)$,
for which the norm and the inner product are denoted by $\|\cdot
\|_{D}$ and $(\cdot,\cdot)_{D}$, respectively.

We introduce the following Sobolev space
\begin{equation*}
\Hmudiv{D}=\{\bv\in [L^2(D)]^d: \ \nabla\cdot( \varepsilon\bv)\in L^2(D)\},
\end{equation*}
with norm given by
$$
\|\bv\|_{\Hmudiv{D}} = (\|\bv\|^2_D+\|\nabla\cdot ( \varepsilon
\bv)\|^2_D)^{\frac12},
$$
where $\nabla\cdot( \varepsilon\bv)$ is the divergence of $ \varepsilon\bv$. Any
$\bv\in \Hmudiv{D}$ can be assigned a trace for the normal component
of $ \varepsilon\bv$ on the boundary. Denote the subspace of $\Hmudiv{D}$  with vanishing trace in
the normal component  by
$$
\Hzmudiv{D}=\{\bv\in \Hmudiv{D}: \ ( \varepsilon\bv)\cdot\bn|_{\partial D} =
0\}.
$$
When $ \varepsilon=I$ is the identity matrix, the spaces
$\Hmudiv{D}$ and $\Hzmudiv{D}$ are denoted as $H({\rm div}; D)$ and
$H_0({\rm div}; D)$, respectively.

We also use the following Sobolev space
$$
H({\rm curl}; D)=\{\bv: \bv\in [L^2(D)]^d, \nabla \times \bv\in
[L^2(D)]^d\}
$$
with norm given by
$$
\|\bv\|_{H({\rm curl}; D)}=(\|\bv\|^2_D+\|\nabla\times
\bv\|^2_D)^{\frac12},
$$
where $\nabla\times\bv$ is the curl of $\bv$. Any $\bv\in H({\rm
curl}; D)$ can be assigned a trace for its tangential component on
the boundary. Denote the subspace of $H({\rm curl}; D)$ with vanishing
trace in the tangential component   by
$$
H_0({\rm curl}; D)=\{\bv\in H({\rm curl}; D): \
\bv\times\bn|_{\partial D} = 0 \}.
$$

When $D=\Omega$, we shall drop the subscript $D$ in the norm and
inner product notation. For convenience, throughout the paper, we
use ``$\lesssim$ '' to denote ``less than or equal to up to a
general constant independent of the mesh size or functions appearing
in the inequality".

\section{Variational Formulations}\label{Section:sec3}

The goal of this section is to derive two different variational
formulations for the time-harmonic Maxwell model problem
(\ref{time-harmonic-ME-01})-(\ref{time-harmonic-ME-04}).

\subsection{Variational Formulation I}
For the electric field intensity $\textbf{E}$, we first apply the
differential operator $\nabla\times\mu^{-1}$ to
(\ref{time-harmonic-ME-01}), and then use the equation
(\ref{time-harmonic-ME-02}) to obtain
\begin{equation}\label{Maxwell-Equations-New}
 \nabla\times ( \mu^{-1} \nabla\times \textbf{E}) = (\omega^2 \varepsilon - i\omega\sigma)
\textbf{E}- i\omega \textbf{j}_e, \qquad\text{in}\ \Omega.
\end{equation}
A typical boundary condition for the electric field intensity
$\textbf{E}$ is given by
\begin{equation}\label{bond}
\textbf{E}\times\textbf{n} = 0, \qquad \mbox{ on }  \Gamma,
\end{equation}
where $\bn$ is the unit outward normal direction to $\Gamma$.

Therefore, a variational formulation for the electric field
intensity $\textbf{E}$ seeks $\textbf{E}\in  H_0(\mbox{curl};
\Omega) \cap H({\text{div}}_\varepsilon;\Omega)$ and $p\in
L^2(\Omega)$ such that
\begin{equation}\label{maxwell1}
 \begin{split}
 ( \mu^{-1}\nabla\times\textbf{E},\
\nabla\times\textbf{v})+((i\omega\sigma-\omega^2\varepsilon)\textbf{E},
\bv)-(\nabla\cdot(\varepsilon \textbf{v}), p)&=  -(i\omega \textbf{j}_e,\textbf{v}),\\
(\nabla\cdot(\varepsilon\textbf{E}), q)&= (\rho,q),
 \end{split}
\end{equation}
for all $\textbf{v} \in  H_0(\mbox{curl}, \Omega)\cap
H(\mbox{div}_\varepsilon, \Omega) $ and $q\in L^2(\Omega)$.

\subsection{Variational Formulation II}
For the magnetic field intensity $\textbf{H}$, we apply $
\nabla\times (i\omega\varepsilon+\sigma)^{-1}$ to the equation
(\ref{time-harmonic-ME-02}), and then use the equation
(\ref{time-harmonic-ME-01}) to obtain
\begin{equation*}\label{Maxwell-Equations-NewNew}
 \nabla\times ((i\omega\varepsilon+\sigma)^{-1} \nabla\times \textbf{H})=-i\omega \mu\textbf{H} +
 \nabla\times(i\omega\varepsilon+\sigma)^{-1} \textbf{j}_e, \qquad\text{in}\ \Omega.
\end{equation*}
  The boundary conditions are
\begin{equation*}
 \begin{split}
((i\omega\varepsilon+\sigma)^{-1} \nabla\times \textbf{H}) \times \textbf{n} = 0, \qquad\text{on}\  \Gamma,\\
 \mu \textbf{H}\cdot \textbf{n} = 0, \qquad\text{on}\  \Gamma.
 \end{split}
\end{equation*}
 Note that $\textbf{j}_e$ as a volume current has no contribution on the boundary $\Gamma$.

A variational formulation for the magnetic field intensity
$\textbf{H}$ seeks $\textbf{H}\in
 H(\mbox{curl}; \Omega) \cap H_0(\mbox{div}_\mu; \Omega) $ and $p\in L^2_0(\Omega)$ such that
\begin{equation}\label{maxwell2}
 \begin{split}
 ((i\omega\varepsilon+\sigma)^{-1}\nabla\times\textbf{H},\
\nabla\times\textbf{v})+(i\omega \mu\textbf{H}, \textbf{v})
-(\nabla\cdot( \mu \textbf{v}), p)&=
(\nabla\times (i\omega\varepsilon+\sigma)^{-1}  \textbf{j}_e,\bv),  \\
(\nabla\cdot( \mu \textbf{H}), q)& = 0,
 \end{split}
\end{equation}
 for all $ \textbf{v}\in  H(\hbox{curl};\Omega)\cap H_0(\mbox{div}_ \mu;
\Omega) $ and $q\in L^2_0(\Omega)$.

For simplicity, throughout the paper, we assume that $\mu$, $\sigma$
and $\varepsilon$ are piecewise constants in the domain $\Omega$
with respect to the finite element partitions to be specified in
forthcoming sections. The results can be extended to piecewise
smooth coefficients without any technical  difficulties.

\section{Weak Differential Operators}\label{Section:sec4}
The variational formulations (\ref{maxwell1}) and (\ref{maxwell2})
are based on two differential operators: divergence and curl. In
this section, we will introduce weak divergence operator for
vector-valued functions of the form $\varepsilon\bv$ and then review
the definition for the weak curl operator. More details can be found
in \cite{cwang1}.

Let $K\subset\Omega$ be any open bounded domain with boundary
$\partial K$. Denote by $\bn$ the unit outward normal direction on
$\partial K$. The space of weak vector-valued functions in $K$ is
defined as follows
\begin{eqnarray*}
V(K)=\{\bv=\{\bv_0,\bv_{b}\}: \ \textbf{v}_0\in [L^2(K)]^d,\
\bv_{b}\in [L^2(\partial K)]^d\},
\end{eqnarray*}
where $\textbf{v}_0$ represents the value of $\textbf{v}$ in the
interior of $K$, and $\bv_{b}$ the information of $\bv$ on the
boundary $\partial K$. There are two piece of information of $\bv$
on $\partial K$ which are needed in the variational formulations
(\ref{maxwell1}) and (\ref{maxwell2}): one of them is the tangential
component $\bn\times(\bv \times\bn)$ and the other one is the normal
component of $ \varepsilon\bv$ on $\partial K$ given by $(
\varepsilon\bv\cdot\bn)\bn$. Intuitively, the vector $\bv_b$ is used
to represent both of them as follows
\begin{equation}\label{interteller.00}
\bv_b =( \varepsilon\bv\cdot\bn)\bn +\bn\times(\bv\times \bn).
\end{equation}
We emphasize that the right-hand side of (\ref{interteller.00}) is
not meant to be a decomposition of the trace of $\bv$ on $\partial
K$.

\subsection{Weak divergence and discrete weak divergence \cite{cwang1,
wy1302}} For any $\textbf{v}\in V(K)$, the weak divergence of $
\varepsilon\bv$, denoted by $\nabla_{w,K}\cdot (
\varepsilon\textbf{v})$, is defined as a bounded linear functional
on the Sobolev space $H^1(K)$ satisfying
\begin{equation*}
\langle \nabla_{w,K}\cdot
( \varepsilon\textbf{v}),\varphi\rangle_K=-( \varepsilon\textbf{v}_0,\nabla
\varphi)_K+\langle \bv_{b}\cdot\bn,\varphi\rangle_{\partial
K},\qquad \forall \ \varphi\in H^1(K).
\end{equation*}
Here the left-hand side stands for the action of the linear
functional on $\varphi\in H^1(K)$, and $\langle\cdot,
\cdot\rangle_{\partial K}$ is the inner product in $L^2(\partial
K)$. The discrete weak divergence of $ \varepsilon\bv$, denoted by
$\nabla_{w,r,K}\cdot ( \varepsilon\textbf{v})$, is defined as the unique
polynomial in $P_r(K)$, $r\ge 0$, satisfying
\begin{equation}\label{2.2}
(\nabla_{w,r,K}\cdot
( \varepsilon\textbf{v}),\varphi)_K=-( \varepsilon\textbf{v}_0,\nabla
\varphi)_K+\langle {\bv}_{b}\cdot\bn, \varphi\rangle_{\partial K},
\quad\forall\ \varphi\in P_r(K),
\end{equation}
where $P_r(K)$ is the set of all polynomials on $K$ with degree $r$
or less.

Assume that $\bv_0$ is sufficiently smooth such that
$\nabla\cdot( \varepsilon\bv_0)\in L^2(K)$. By applying the integration by
parts to the first term on the right-hand side of (\ref{2.2}), we
have
\begin{equation}\label{2.2new}
\begin{split}
(\nabla_{w,r,K} \cdot( \varepsilon\bv),\varphi)_K =(\nabla\cdot( \varepsilon\bv_0),
\varphi)_K+\langle (\bv_{b}- \varepsilon\bv_0)\cdot\bn,
\varphi\rangle_{\partial K},
\end{split}
\end{equation}
for any $\varphi\in P_r(K)$.

\subsection{Weak curl and discrete weak curl \cite{mwyz, cwang1}}
The weak curl of $\textbf{v} \in V(K)$, denoted by $\nabla_{w,K}
\times \textbf{v}$, is defined as a bounded linear functional on the
Sobolev space $[H^1(K)]^d$ satisfying
$$
\langle \nabla_{w,K} \times \textbf{v},\varphi\rangle_K=(\textbf{v}_0,
\nabla\times \varphi)_K-\langle \textbf{v}_{b}\times
\textbf{n},\varphi\rangle_{\partial K},\quad\ \forall \ \varphi\in
[H^1(K)]^d.
$$

The discrete weak curl of $\textbf{v} \in V(K)$, denoted by
$\nabla_{w,r,K} \times \textbf{v}$, is defined as the unique
polynomial-valued vector in $[P_r (K)]^d$, such that
\begin{equation}\label{2.5}
(\nabla_{w,r,K} \times \textbf{v},\varphi)_K=(\textbf{v}_0,
\nabla\times \varphi)_K-\langle \textbf{v}_{b}\times
\textbf{n},\varphi\rangle_{\partial K}, \quad\forall\varphi\in [P_r
(K)]^d.
\end{equation}

For sufficiently smooth $\bv_0$ with $\nabla\times\bv_0\in
[L^2(K)]^d$, by applying the integration by parts to the first term
on the right-hand side of (\ref{2.5}), we obtain
\begin{equation}\label{2.5new}
\begin{split}
(\nabla_{w,r,K} \times \bv,\varphi)_K =(\nabla\times\bv_0,
\varphi)_K-\langle (\bv_{b}-\bv_0)\times\bn,
\varphi\rangle_{\partial K},
\end{split}
\end{equation}
for any $\varphi\in [P_r(K)]^d$.

\begin{remark}
All the definitions and formulations with respect to the coefficient
$\varepsilon$ of this section can be generalized to the coefficient
$\mu$. This is particularly useful in the study of the equation for
the magnetic field intensity function.
\end{remark}

\section{Numerical Algorithms by Weak Galerkin}\label{Section:sec5}

Let ${\cal T}_h$ be a finite element partition of the domain
$\Omega\subset \mathbb{R}^d (d=2, 3)$ with mesh size $h$. Assume
that $\T_h$ consists of polygons/polyhedra of arbitrary shape and is
shape regular as defined in \cite{wy1202}. Denote by ${\cal E}_h$
the set of all edges/faces in ${\cal T}_h$ and ${\cal E}^0_h={\cal
E}_h\setminus {\partial\Omega}$ the set of all interior edges/faces
in ${\cal T}_h$. For each interior edge/face $e\in {\cal E}^0_h$, we
assign a prescribed normal direction $\bn_e$ to $e$. Denote by $\bn$
the unit outward normal direction to the boundary $\Gamma$. Denote
the jump of $q$ on the edge/face $e\in\E_h$ by
\begin{equation}\label{jump}
 \jump{q}=\left\{    \begin{array}{cc}
  q|_{\partial T_1}-  q|_{\partial T_2},&e\in\E_h^0, \\
   q, & e  \subset \partial \Omega,\\
  \end{array}\right.
\end{equation}
where $q|_{\partial T_i}$ denotes the value of $q$ on an edge/face
$e$ as seen from the element $T_i$, $i=1, 2$. Here $T_1$ and $T_2$
are the two elements that share $e$ as a common edge/face. The order
of $T_1$ and $T_2$ is non-essential in (\ref{jump}) as long as the
difference is taken in a consistent way in all the formulas. If
$e\subset\Gamma$ is a boundary edge, then $\jump{q} = q|_e$ is
defined as its trace on $e$.

Let $k\ge 1$ be a given integer. For each element $T\in\T_h$, we
define the local weak  finite element space by
$$
\bV(k,T)=\{ \bv=\{\bv_0,\bv_b\}: \bv_0\in [P_k(T)]^d, \bv_b\in
[P_{k}(e)]^d,\ e\in (\partial T\cap\E_h)\}.
$$
By patching the local weak finite element space $\bV(k,T)$ together
over all the elements $T\in {\cal T}_h$ through a common value
$\bv_b$ on the interior interface $\E_h^0$, we obtain a global weak
finite element space:
\begin{equation}\label{EQ:global-WFES}
\bV_h=\{\bv=\{\bv_0,\bv_b\}:\; \bv|_T\in \bV(k,T), \ T\in \T_h \}.
\end{equation}

Introduce two subspaces of $\bV_h$ as follows:
 \begin{equation*}\label{EQ:global-WFES-testspace-problem2}
\bV_{h,0}^1=\{\bv=\{\bv_0,\bv_b\}\in\bV_h:\; \bv_b  \times\bn = 0 \
\mbox{on} \ \Gamma\},
\end{equation*}
 \begin{equation*}\label{EQ:global-WFES-testspace-problem1}
\bV_{h,0}^2 =\{\bv=\{\bv_0,\bv_b\}\in\bV_h:\; \bv_b  \cdot\bn = 0 \
\mbox{on} \ \Gamma\}.
\end{equation*}
We further define two more finite element spaces
$$
W_h^1 =\{q: \ q\in L^2(\Omega), \ q|_T\in P_{k-1}(T), T\in\T_h\},
$$
$$
W_h^2 =\{q: \ q\in L_0^2(\Omega), \ q|_T\in P_{k-1}(T), T\in\T_h\}.
$$

The discrete weak divergence $(\nabla_{w,k-1}\cdot ) $ and the
discrete weak curl $(\nabla_{w,k-1} \times)$ can be computed by
using (\ref{2.2}) and (\ref{2.5}) on each element; i.e.,
\begin{align*}
(\nabla_{w,k-1}\cdot ( \varepsilon\textbf{v}))|_T=&\nabla_{w,k-1,T}\cdot
( \varepsilon\textbf{v}|_T), \quad \ \textbf{v}\in \bV_h,\\
(\nabla_{w,k-1}\cdot ( \mu\textbf{v}))|_T=&\nabla_{w,k-1,T}\cdot
( \mu\textbf{v}|_T), \quad \ \textbf{v}\in \bV_h,\\
(\nabla_{w,k-1}\times  \textbf{v})|_T=&\nabla_{w,k-1,T}\times
(\textbf{v}|_T), \qquad \textbf{v}\in \bV_h.
\end{align*}
For simplicity of notation and without confusion, we shall drop the subscript $k-1$ from
the notations $(\nabla_{w,k-1}\cdot )$ and
$(\nabla_{w,k-1}\times)$ from now on.

We introduce the following bilinear forms
\begin{align*}
a_1(\textbf{v},\textbf{w})=&\sum_{T\in {\cal T}_h}(\mu^{-1}\nabla_{w}\times
\textbf{v},\nabla_{w} \times
\textbf{w})_T+  ((i\omega \sigma-\omega^2 \varepsilon)
\textbf{v}_0,\textbf{w}_0)_T+s_1(\textbf{v},\textbf{w}),\\
a_2(\textbf{v},\textbf{w})=&\sum_{T\in {\cal T}_h}((i \omega \varepsilon+ \sigma)^{-1}\nabla_{w}\times
\textbf{v},\nabla_{w} \times
\textbf{w})_T+    (i\omega \mu \textbf{v}_0,\textbf{w}_0)_T+s_2(\textbf{v},\textbf{w}),\\
b_1(\textbf{v},q)=& \sum_{T\in {\cal T}_h}(\nabla_{w}\cdot ( \varepsilon\textbf{v}),
q)_T, \\
b_2(\textbf{v},q)=&\sum_{T\in {\cal T}_h}(\nabla_{w}\cdot ( \mu\textbf{v}),q)_T,
\end{align*}
where
\begin{align*}
 s_1(\textbf{v},\textbf{w})=&\sum_{T\in {\cal
T}_h}h_T^{-1}\langle
 (  \varepsilon\textbf{v}_0-\textbf{v}_b)\cdot\bn,
(\varepsilon \textbf{w}_0-\textbf{w}_b)\cdot\bn\rangle_{\partial T}  +h_T^{-1}\langle
 (\textbf{v}_0-\textbf{v}_b)\times\bn,
(\textbf{w}_0-\textbf{w}_b)\times\bn\rangle_{\partial T},\\
 s_2(\textbf{v},\textbf{w})=&\sum_{T\in {\cal
T}_h}h_T^{-1}\langle
 ( \mu\textbf{v}_0-\textbf{v}_b)\cdot\bn,
( \mu\textbf{w}_0-\textbf{w}_b)\cdot\bn\rangle_{\partial T} +h_T^{-1}\langle
 (\textbf{v}_0-\textbf{v}_b)\times\bn,
(\textbf{w}_0-\textbf{w}_b)\times\bn\rangle_{\partial T}.
\end{align*}

\begin{algorithm}\label{algo1} For a numerical approximation of the electric field intensity $\bE$, one may
seek $\bE_h \in \bV_{h,0}^1$ and an auxiliary function $p_h\in
W^1_h$, such that
\begin{align}\label{3.3}
a_1(\textbf{E}_h,\textbf{v})-b_1(\textbf{v},p_h)=& -(i \omega
\textbf{j}_e,\textbf{v}_0), \qquad \forall
\textbf{v}=\{\textbf{v}_0, \textbf{v}_b\}\in \bV_{h,0}^1,
\\
b_1(\textbf{E}_h,w) =&( \rho,w) , \qquad\qquad\quad \forall \ w\in
W^1_h.\label{3.4}
\end{align}
\end{algorithm}

\begin{algorithm}\label{algo2} For a numerical approximation of the magnetic field intensity $\bH$, one may
seek $\textbf{H}_h  \in \bV_{h,0}^2$ and an auxiliary function
$p_h\in W^2_h$, such that
\begin{align}\label{3.5}
a_2(\textbf{H}_h,\textbf{v})-b_2(\textbf{v},p_h)=& ( \nabla \times
(i \omega \varepsilon +\sigma)^{-1} \textbf{j}_e,\textbf{v}_0),
\qquad \forall  \textbf{v} =\{\textbf{v}_0, \textbf{v}_b\}\in
\bV_{h,0}^2,
\\
b_2(\textbf{H}_h,w) =&0 , \qquad\qquad\qquad\qquad\qquad\quad
\forall \ w\in W^2_h.\label{3.6}
\end{align}
\end{algorithm}

\section{Verification of Stability Conditions}\label{Section:sec6}
We first introduce two norms: one in the weak finite element space
 $\bV_{h, 0}^1$ and the other in $\bV_{h,  0}^2$ as follows:
\begin{equation}\label{Eq:bar-norm}
\begin{split}
&\3bar\textbf{v}\3bar_{\bV_{h, 0}^1}  = \Big(\sum_{T\in {\cal T}_h}
\|\nabla_{w}\times \textbf{v}\|_T^2+  \|\bv_0\|_T^2 \\
& + h_T^{-1}\|
 \varepsilon  \textbf{v}_0\cdot\bn-\textbf{v}_b\cdot\bn\|^2_{\partial T}
+ h_T^{-1}\|
\textbf{v}_0\times\bn-\textbf{v}_b\times\bn\|^2_{\partial T}
 \Big)^{\frac{1}{2}},\quad \forall  \textbf{v} \in \bV_{h, 0}^1, \\
 \end{split}
\end{equation}
\begin{equation}\label{Eq:bar-norm-2}
\begin{split}
& \3bar\textbf{v}\3bar_{\bV_{h, 0}^2}   =\Big(\sum_{T\in {\cal T}_h}
\|\nabla_{w}\times \textbf{v}\|_T^2+  \|\bv_0\|_T^2 \\
& + h_T^{-1}\|
  \mu\textbf{v}_0\cdot\bn-\textbf{v}_b\cdot\bn\|^2_{\partial T}
+ h_T^{-1}\|
 \textbf{v}_0\times\bn-\textbf{v}_b\times\bn\|^2_{\partial T}
 \Big)^{\frac{1}{2}}, \quad  \forall  \textbf{v} \in \bV_{h, 0}^2.
 \end{split}
\end{equation}

In the finite element spaces $W_h^1$ and $W_h^2$, we introduce
mesh-dependent norms as follows
\begin{equation}\label{Eq:bar-norm-q1}
\begin{split}
 \|q\|_{W_h^1} =  \Big(h^2\sum_{T\in {\cal T}_h} ( \varepsilon\nabla q,\nabla q)_T +
h\sum_{e\in \E_h} \|\jump{q}\|_e^2 \Big)^{\frac{1}{2}}, \qquad
\forall q\in W_h^1,
 \end{split}
\end{equation}
\begin{equation}\label{Eq:bar-norm-q2}
\begin{split}
  \|q\|_{W_h^2} =  \Big(h^2\sum_{T\in {\cal T}_h} ( \mu\nabla q,\nabla q)_T +
h\sum_{e\in \E_h^0} \|\jump{q}\|_e^2\end{split} \Big)^{\frac{1}{2}},\qquad \forall q\in W_h^2.
\end{equation}

The following two lemmas are concerned with the coercivity of the
bilinear forms $a_1(\cdot,\cdot)$ and $a_2(\cdot,\cdot)$. The
boundedness of these two bilinear forms is straightforward.

\begin{lemma} There exists a positive constant $C$ such that for any $\bv\in \bV_{h,
0}^1$ one has
\begin{equation}\label{EQ:aone-coercivity}
|a_1(\bv,\bv)| \ge C \3bar\textbf{v}\3bar_{\bV_{h, 0}^1}^2.
\end{equation}
\end{lemma}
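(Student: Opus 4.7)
The idea is to split $a_1(\bv,\bv)$ into its real and imaginary parts, use the elementary inequality $|A+iB| \geq (A+\lambda B)/\sqrt{1+\lambda^2}$ for any $\lambda > 0$ with $A + \lambda B \geq 0$, and then choose $\lambda$ large enough that the imaginary part (which is positive and controls the $L^2$-mass of $\bv_0$) absorbs the indefinite sign-wrong term $-\omega^2(\varepsilon \bv_0,\bv_0)$ sitting inside the real part.

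\textbf{Step 1: real/imaginary decomposition.} Substituting $\bw=\bv$ and using that $\mu, \varepsilon, \sigma$ are real-valued and that $\mu, \varepsilon$ are symmetric, I would write
\[
a_1(\bv,\bv) = A(\bv) + i B(\bv),
\]
where
\[
A(\bv) = \sum_{T\in\T_h} \bigl(\mu^{-1}\nabla_w\times\bv,\nabla_w\times\bv\bigr)_T - \omega^2(\varepsilon\bv_0,\bv_0)_T + s_1(\bv,\bv),
\]
\[
B(\bv) = \omega\sum_{T\in\T_h}(\sigma\bv_0,\bv_0)_T .
\]
Both $A(\bv)$ and $B(\bv)$ are real, $s_1(\bv,\bv) \geq 0$, and $B(\bv) \geq 0$ under the standing assumption $\sigma \geq \sigma_0 > 0$.

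\textbf{Step 2: partial lower bounds.} From the uniform positive definiteness of $\mu^{-1}$ and $\varepsilon$ and the positivity of $\sigma$, I would record
\[
A(\bv) \geq c_\mu \sum_{T\in\T_h}\|\nabla_w\times\bv\|_T^2 + s_1(\bv,\bv) - \omega^2 C_\varepsilon \|\bv_0\|^2,
\qquad
B(\bv) \geq \omega\sigma_0\|\bv_0\|^2 .
\]
Observe already that, by definition (\ref{Eq:bar-norm}),
\[
\3bar\bv\3bar_{\bV_{h,0}^1}^2 = \sum_{T\in\T_h}\|\nabla_w\times\bv\|_T^2 + \|\bv_0\|^2 + s_1(\bv,\bv),
\]
so controlling $\|\nabla_w\times\bv\|^2$, $\|\bv_0\|^2$ and $s_1(\bv,\bv)$ controls the whole triple-bar norm.

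\textbf{Step 3: absorb the indefinite mass term.} Combining the bounds in Step 2, for any $\lambda>0$ one has
\[
A(\bv) + \lambda B(\bv) \geq c_\mu \sum_{T\in\T_h}\|\nabla_w\times\bv\|_T^2 + s_1(\bv,\bv) + \bigl(\lambda\omega\sigma_0 - \omega^2 C_\varepsilon\bigr)\|\bv_0\|^2.
\]
Choosing $\lambda$ so large that $\lambda\omega\sigma_0 - \omega^2 C_\varepsilon \geq 1$ (for example $\lambda = (1+\omega^2 C_\varepsilon)/(\omega\sigma_0)$) yields
\[
A(\bv) + \lambda B(\bv) \geq c\,\3bar\bv\3bar_{\bV_{h,0}^1}^2
\]
for a positive constant $c$ depending only on $\mu_{\min}^{-1}$, $\varepsilon_{\max}$, $\sigma_0$ and $\omega$.

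\textbf{Step 4: conclude via the complex-modulus inequality.} Because $A+\lambda B \geq 0$ from Step 3 and by Cauchy--Schwarz $|A+iB|\sqrt{1+\lambda^2} \geq A+\lambda B$, I would finish with
\[
|a_1(\bv,\bv)| \geq \frac{A(\bv)+\lambda B(\bv)}{\sqrt{1+\lambda^2}} \geq \frac{c}{\sqrt{1+\lambda^2}}\,\3bar\bv\3bar_{\bV_{h,0}^1}^2,
\]
which is the required coercivity.

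\textbf{Main obstacle.} The only non-routine point is the sign-indefiniteness of the real part $A(\bv)$ arising from the $-\omega^2(\varepsilon\bv_0,\bv_0)$ reaction term: no pointwise inequality gives $A(\bv) \geq C\3bar\bv\3bar^2$ on its own. The trick is to exploit the conductivity $\sigma > 0$ encoded in the imaginary part $B(\bv)$ and to take a linear combination of real and imaginary parts with $\lambda$ large, which is legitimate precisely because $|z| \geq \Re(e^{-i\theta}z)$ for every $\theta$. If $\sigma$ is allowed to vanish this argument breaks down, which is consistent with the standard Maxwell theory where only a Gårding-type inequality is available in the lossless case.
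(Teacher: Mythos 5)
Your proof is correct and follows essentially the same route as the paper's: both split $a_1(\bv,\bv)$ into real and imaginary parts, use $\sigma\ge\sigma_0>0$ in the imaginary part to control $\|\bv_0\|^2$ by $|a_1(\bv,\bv)|$, and then absorb the indefinite term $-\omega^2(\varepsilon\bv_0,\bv_0)$ appearing in the real part. Your explicit linear combination $A+\lambda B$ together with the bound $|A+iB|\ge (A+\lambda B)/\sqrt{1+\lambda^2}$ simply makes precise the ``combining'' step that the paper states without detail.
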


\begin{proof} From the definition of the bilinear form
$a_1(\cdot,\cdot)$ we have
$$
a_1(\textbf{v},\textbf{v})=\sum_{T\in {\cal
T}_h}(\mu^{-1}\nabla_{w}\times \textbf{v},\nabla_{w} \times
\textbf{v})_T+  ((i\omega \sigma-\omega^2 \varepsilon)
\textbf{v}_0,\textbf{v}_0)_T+s_1(\textbf{v},\textbf{v}).
$$
Since imaginary part of $a_1(\textbf{v},\textbf{v})$ is given by
$(\omega\sigma \bv_0, \bv_0)$, then we have
\begin{equation}\label{EQ:10.11.2016:001}
\omega \sigma_0 \|\bv_0\|^2 \leq (\omega\sigma \bv_0, \bv_0) \leq
|a_1(\textbf{v},\textbf{v})|,
\end{equation}
where $\sigma_0$ is the minimum value of $\sigma$ over $\Omega$. The
real part of $a_1(\textbf{v},\textbf{v})$ is given by
$$
Re(a_1(\textbf{v},\textbf{v})) = \sum_{T\in {\cal
T}_h}(\mu^{-1}\nabla_{w}\times \textbf{v},\nabla_{w} \times
\textbf{v})_T -  (\omega^2 \varepsilon
\textbf{v}_0,\textbf{v}_0)_T+s_1(\textbf{v},\textbf{v}).
$$
Thus,
\begin{equation}\label{EQ:10.11.2016:002}
\left| \sum_{T\in {\cal T}_h}(\mu^{-1}\nabla_{w}\times
\textbf{v},\nabla_{w} \times \textbf{v})_T -  (\omega^2 \varepsilon
\textbf{v}_0,\textbf{v}_0)_T+s_1(\textbf{v},\textbf{v}) \right| \leq
|a_1(\bv,\bv)|.
\end{equation}
Combining (\ref{EQ:10.11.2016:001}) with (\ref{EQ:10.11.2016:002})
gives rise to the coercivity estimate (\ref{EQ:aone-coercivity}).
This completes the proof of the lemma.
\end{proof}

\begin{lemma} There exists a positive constant $C$ such that for any $\bv\in \bV_{h,
0}^2$ one has
\begin{equation}\label{EQ:atwo-coercivity}
|a_2(\bv,\bv)| \ge C \3bar\textbf{v}\3bar_{\bV_{h, 0}^2}^2.
\end{equation}
\end{lemma}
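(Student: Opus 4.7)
The plan is to follow the strategy used in the previous lemma: decompose $a_2(\bv,\bv)$ into its real and imaginary parts and extract the pieces of $\3bar\bv\3bar_{\bV_{h,0}^2}$ from each. The real part should control the weak curl contribution and the stabilizer $s_2(\bv,\bv)$, which already contains the two jump quantities appearing in the definition of $\3bar\cdot\3bar_{\bV_{h,0}^2}$; the $\|\bv_0\|^2$ piece will then have to be recovered from the imaginary part. The new wrinkle compared with $a_1$ is that the conductivity $\sigma$ no longer appears as an explicit coefficient but is buried inside $(i\omega\varepsilon+\sigma)^{-1}$.

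The key algebraic step is to set $M=\sigma+i\omega\varepsilon$ and to observe the identity
\begin{equation*}
M^{-1}+M^{-*} = M^{-1}(M+M^*)M^{-*} = 2\,M^{-1}\sigma M^{-*},
\end{equation*}
which is valid with no commutation assumption on $\sigma$ and $\varepsilon$. Writing $\eta_T = M^{-*}(\nabla_w\times\bv)|_T$, this yields
\begin{equation*}
\mathrm{Re}\bigl((i\omega\varepsilon+\sigma)^{-1}\nabla_w\times\bv,\,\nabla_w\times\bv\bigr)_T = (\sigma\eta_T,\eta_T)_T \geq C\|\nabla_w\times\bv\|_T^2,
\end{equation*}
on using uniform positive definiteness of $\sigma$ together with the uniform bound on $\|M^*\|$. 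A parallel calculation gives $\mathrm{Im}\bigl((i\omega\varepsilon+\sigma)^{-1}\nabla_w\times\bv,\nabla_w\times\bv\bigr)_T = -\omega(\varepsilon\eta_T,\eta_T)_T$, whose absolute value is again $\leq C\|\nabla_w\times\bv\|_T^2$. Since $\mu$ is real symmetric the term $(i\omega\mu\bv_0,\bv_0)$ is purely imaginary, and $s_2(\bv,\bv)$ is real and nonnegative; putting the pieces together gives
\begin{equation*}
|a_2(\bv,\bv)| \geq \mathrm{Re}(a_2(\bv,\bv)) \geq C\sum_{T\in\T_h}\|\nabla_w\times\bv\|_T^2 + s_2(\bv,\bv).
\end{equation*}

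To recover $\|\bv_0\|^2$, I would then exploit
\begin{equation*}
\mathrm{Im}(a_2(\bv,\bv)) = \omega(\mu\bv_0,\bv_0) - \omega\sum_{T\in\T_h}(\varepsilon\eta_T,\eta_T)_T.
\end{equation*}
The second sum is already controlled by $\sum_T\|\nabla_w\times\bv\|_T^2$, and therefore by $|a_2(\bv,\bv)|$, so the triangle inequality yields
\begin{equation*}
\omega(\mu\bv_0,\bv_0) \leq |a_2(\bv,\bv)| + C\sum_{T\in\T_h}\|\nabla_w\times\bv\|_T^2 \leq C'|a_2(\bv,\bv)|.
\end{equation*}
Uniform positive definiteness of $\mu$ then produces $\|\bv_0\|^2\leq C''|a_2(\bv,\bv)|$, and summing the three contributions gives (\ref{EQ:atwo-coercivity}).

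The main obstacle is precisely this absorption argument. In the proof for $a_1$, the imaginary part contained $\omega(\sigma\bv_0,\bv_0)$ as an explicit positive quantity, so $\|\bv_0\|^2$ dropped out immediately; here the curl term contributes an imaginary part of the opposite sign and must first be tamed via the real part. The identity $M^{-1}+M^{-*}=2M^{-1}\sigma M^{-*}$ is the small algebraic fact that makes both the real-part lower bound and this cascade work uniformly in the mesh, and without assuming that the material tensors $\sigma$ and $\varepsilon$ commute.
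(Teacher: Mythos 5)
Your proof is correct and follows essentially the same route as the paper: the real part of $a_2(\bv,\bv)$ controls the weak-curl term and the stabilizer $s_2(\bv,\bv)$, and the $\|\bv_0\|^2$ contribution is then recovered from the imaginary part after absorbing the curl term already bounded by the real part. The only difference is cosmetic: the paper computes $\mathrm{Re}\,(i\omega\varepsilon+\sigma)^{-1}=\sigma(\sigma^2+\omega^2\varepsilon^2)^{-1}$ directly (legitimate here since $\sigma$ is scalar and commutes with $\varepsilon$), whereas your identity $M^{-1}+M^{-*}=2M^{-1}\sigma M^{-*}$ reaches the same sign information without any commutation assumption, and you spell out the absorption step that the paper leaves implicit.
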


\begin{proof} From the definition of the bilinear form
$a_2(\cdot,\cdot)$ we have
$$
a_2(\textbf{v},\textbf{v})= \sum_{T\in {\cal T}_h}((i \omega
\varepsilon+ \sigma)^{-1}\nabla_{w}\times \textbf{v},\nabla_{w}
\times \textbf{v})_T+ (i\omega \mu
\textbf{v}_0,\textbf{v}_0)_T+s_2(\textbf{v},\textbf{v})
$$
The real part of $a_2(\textbf{v},\textbf{v})$ is given by
$$
Re(a_2(\textbf{v},\textbf{v})) = \sum_{T\in {\cal
T}_h}(\sigma(\sigma^2 + \omega^2 \varepsilon^2)^{-1}\nabla_{w}\times
\textbf{v},\nabla_{w} \times \textbf{v})_T+
s_2(\textbf{v},\textbf{v})\ge 0.
$$
Hence, we have
\begin{equation}\label{EQ:10.11.2016:010}
\sum_{T\in {\cal T}_h}(\sigma(\sigma^2 + \omega^2
\varepsilon^2)^{-1}\nabla_{w}\times \textbf{v},\nabla_{w} \times
\textbf{v})_T+ s_2(\textbf{v},\textbf{v}) \leq |a_2(\bv,\bv)|.
\end{equation}
The imaginary part of $a_2(\textbf{v},\textbf{v})$ is given by
$$
Im(a_2(\textbf{v},\textbf{v})) = \sum_{T\in {\cal T}_h} (\omega\mu
\bv_0,\bv_0)_T- (\omega\varepsilon(\sigma^2 + \omega^2
\varepsilon^2)^{-1}\nabla_{w}\times \textbf{v},\nabla_{w} \times
\textbf{v})_T,
$$
which leads to
\begin{equation}\label{EQ:10.11.2016:011}
\left|\sum_{T\in {\cal T}_h} (\omega\mu \bv_0,\bv_0)_T-
(\omega\varepsilon(\sigma^2 + \omega^2
\varepsilon^2)^{-1}\nabla_{w}\times \textbf{v},\nabla_{w} \times
\textbf{v})_T\right| \leq |a_2(\bv,\bv)|.
\end{equation}
Combining (\ref{EQ:10.11.2016:010}) with (\ref{EQ:10.11.2016:011})
gives rise to the coercivity estimate (\ref{EQ:atwo-coercivity}).
This completes the proof of the lemma.
\end{proof}

\medskip

Next, we establish an {\em inf-sup} condition for the bilinear form
$b_1(\cdot,\cdot)$ used in the WG algorithm \ref{algo1}. To this
end, for any $q\in W^1_h$, set $\bv_q=\{-h^2\nabla q;\ h
\bv_{q,b}\}\in \bV_{h,0}^1$, where
\begin{equation}\label{November-30:500}
\bv_{q,b}=\left\{
\begin{array}{ll}
\jump{q}\ \bn_e,&\qquad \mbox{ on } \ e\in\E_h^0,\\
 q \ \bn,&\qquad \mbox{ on } \ e\in \E_h\cap \Gamma.
\end{array}
\right.
\end{equation}
Now for any $\bv=\{\bv_0; \bv_b\}\in \bV_{h,0}^1$, from the
definition (\ref{2.2}) of weak divergence, we have
\begin{equation}\label{November.30.001-new}
\begin{split}
b_1(\bv,q) & = \sum_{T\in {\cal T}_h} (\nabla_w \cdot ( \varepsilon\bv),q)_T\\
& = \sum_{T\in {\cal T}_h} -( \varepsilon\bv_0,\nabla q)_T+\langle
\bv_b
\cdot \bn, q\rangle_{\partial T}\\
& = - \sum_{T\in {\cal T}_h} ( \varepsilon\bv_0,\nabla q)_T +
\sum_{e\in\E_h} \langle \bv_b\cdot\bn_e, \jump{q}\rangle_e.
\end{split}
\end{equation}

\begin{lemma} \emph{(inf-sup condition for WG algorithm \ref{algo1})} \label{lvq2}
For any $q\in W_h^1$, there exists a finite element function
$\textbf{v}_q\in \bV_{h,0}^1$ such that
\begin{eqnarray}\label{November:29:801}
b_1(\bv_q,q) &=& h^2\sum_{T\in {\cal T}_h} ( \varepsilon\nabla
q,\nabla q)_T +
h\sum_{e\in \E_h} \|\jump{q}\|_e^2, \\
\label{6.5} \3bar\bv_q\3bar_{\bV_{h, 0}^1}  &\leqC &\|q\|_{W_h^1}.
\end{eqnarray}
\end{lemma}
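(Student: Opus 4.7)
The test function $\bv_q=\{-h^2\nabla q;\, h\bv_{q,b}\}$ is the standard Bochev–Dohrmann-style choice adapted to the two scales present in $\|q\|_{W_h^1}^2$ (volume gradients at scale $h^2$, edge jumps at scale $h$). The plan has three parts: (i) check admissibility, (ii) evaluate $b_1(\bv_q,q)$ by a direct substitution, and (iii) estimate $\3bar\bv_q\3bar_{\bV_{h,0}^1}$ term by term.

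First, I would verify $\bv_q\in\bV_{h,0}^1$. The interior component $-h^2\nabla q$ is piecewise polynomial of degree $k-1\leq k$, and on each boundary face $e\subset\Gamma$ one has $h\bv_{q,b}=h\,q\,\bn$, whose tangential trace $h\,q\,\bn\times\bn$ vanishes, as required. Second, I would insert $\bv_q$ into the identity (\ref{November.30.001-new}):
\begin{equation*}
b_1(\bv_q,q)= h^2\sum_{T}(\varepsilon\nabla q,\nabla q)_T + h\sum_{e\in\E_h}\langle \bv_{q,b}\cdot\bn_e,\jump{q}\rangle_e.
\end{equation*}
On interior faces $\bv_{q,b}\cdot\bn_e=\jump{q}$ by definition, and on boundary faces $\bv_{q,b}\cdot\bn= q=\jump{q}$ (the boundary-face convention in (\ref{jump})), so each edge inner product reduces to $\|\jump{q}\|_e^2$, delivering (\ref{November:29:801}).

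For the norm bound (\ref{6.5}), I would estimate each contribution to $\3bar\bv_q\3bar_{\bV_{h,0}^1}^2$ separately. The interior piece is immediate: $\|\bv_{q,0}\|_T^2=h^4\|\nabla q\|_T^2\lesssim h^2\cdot h^2(\varepsilon\nabla q,\nabla q)_T$, using the uniform ellipticity of $\varepsilon$, which sums to at most $\|q\|_{W_h^1}^2$. For the two face terms
\begin{equation*}
h_T^{-1}\|\varepsilon\bv_{q,0}\cdot\bn-\bv_{q,b}\cdot\bn\|_{\partial T}^2+h_T^{-1}\|\bv_{q,0}\times\bn-\bv_{q,b}\times\bn\|_{\partial T}^2,
\end{equation*}
a triangle inequality separates the $h^2\nabla q$ contribution from the $h\bv_{q,b}$ contribution; the former is controlled by the polynomial trace inequality $\|\nabla q\|_{\partial T}^2\lesssim h_T^{-1}\|\nabla q\|_T^2$, giving a bound proportional to $h^2\|\nabla q\|_T^2$, while the latter gives exactly $h\sum_{e\subset\partial T}\|\jump{q}\|_e^2$. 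Both are absorbed into $\|q\|_{W_h^1}^2$.

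The main obstacle, and the last term to handle, is $\sum_T\|\nabla_w\times\bv_q\|_T^2$. Here I would exploit that $\bv_{q,0}=-h^2\nabla q$ is curl-free, so the integration-by-parts form (\ref{2.5new}) collapses to
\begin{equation*}
(\nabla_w\times\bv_q,\varphi)_T=-\langle(h\bv_{q,b}+h^2\nabla q)\times\bn,\varphi\rangle_{\partial T}, \qquad \varphi\in[P_{k-1}(T)]^d.
\end{equation*}
Taking $\varphi=\nabla_w\times\bv_q$ and combining Cauchy–Schwarz with the polynomial inverse trace inequality $\|\varphi\|_{\partial T}\lesssim h_T^{-1/2}\|\varphi\|_T$ yields $\|\nabla_w\times\bv_q\|_T\lesssim h_T^{-1/2}\|(h\bv_{q,b}+h^2\nabla q)\times\bn\|_{\partial T}$, and the right-hand side is estimated by the same two ingredients as above. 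Summing over $T$ completes the proof.
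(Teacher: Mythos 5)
Your proposal is correct and follows essentially the same route as the paper: the same test function $\bv_q=\{-h^2\nabla q;\,h\bv_{q,b}\}$, the same substitution into (\ref{November.30.001-new}) to get (\ref{November:29:801}), and the same term-by-term control of $\3bar\bv_q\3bar_{\bV_{h,0}^1}$ (the paper organizes this via the splitting $\bv_q=\bv_q^{(1)}+\bv_q^{(2)}$, which is just your triangle inequality in disguise, and bounds the weak curl of the interior piece by testing (\ref{2.5}) against $\curl\varphi$ with the inverse inequality rather than your integration-by-parts-plus-trace variant — the two computations are equivalent). No gaps.
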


\begin{proof} For any $q\in W_h^1$, we define $\bv_{q,b}$ by (\ref{November-30:500}) and set
$\textbf{v}_q = \{-h^2\nabla q;\ h \bv_{q,b}\}\in \bV_{h,0}^1$. By
letting $\bv=\bv_q$ in (\ref{November.30.001-new}) we obtain
\begin{equation*}
b_1(\bv_q,q)=h^2\sum_{T\in {\cal T}_h} ( \varepsilon\nabla q,\nabla
q)_T + h\sum_{e\in \E_h} \|\jump{q}\|_e^2,
\end{equation*}
which verifies the identity (\ref{November:29:801}).

To derive (\ref{6.5}), we consider the following decomposition
$$
\bv_q = \bv_q^{(1)} + \bv_q^{(2)},
$$
where $\bv_q^{(1)} = -\{h^2\nabla q;\ 0\}$ and $\bv_q^{(2)} = \{0;\
h\bv_{q,b}\}$. It suffices to establish (\ref{6.5}) for
$\bv_q^{(1)}$ and $\bv_q^{(2)}$ respectively. Using
(\ref{Eq:bar-norm}), we have
\begin{equation}\label{vq2}
\begin{split}
\3bar \textbf{v}_q^{(1)} \3bar_{\bV_{h, 0}^1} ^2 = & \sum_{T\in
{\cal T}_h}
\|\nabla_w \times \textbf{v}_q^{(1)}\|^2_T+ \|   h^2 \nabla q\|^2_T \\
& + h_T^{-1}\|h^2  \varepsilon\nabla q\cdot\bn \|^2_{\partial T} +
h_T^{-1}\|h^2\nabla q \times\bn\|^2_\pT.
\end{split}
\end{equation}
It follows from (\ref{2.5}) of the discrete weak curl that
$$
(\nabla_w \times \textbf{v}_q^{(1)}, \varphi)_T = -h^2 (\nabla q,
\curl\varphi)_T,\qquad \forall \ \varphi\in [P_{k-1}(T)]^d.
$$
Using the inverse inequality (\ref{Div-Curl:inverse}) we obtain
$$
\|\nabla_w \times \textbf{v}_q^{(1)}\|_T \leqC h \|\nabla q\|_T.
$$
Substituting the above inequality into (\ref{vq2}) and then using
the trace inequality (\ref{Aa-trace}) gives rise to
$$
\3bar \textbf{v}_q^{(1)} \3bar_{\bV_{h, 0}^1} ^2 \leqC h^2\|\nabla
q\|_T^2,
$$
which verifies the estimate (\ref{6.5}) for $\textbf{v}_q^{(1)}$.

For $\textbf{v}_q^{(2)}$, we again use  (\ref{Eq:bar-norm}) to
obtain
\begin{equation}\label{vq2-new}
\3bar \textbf{v}_q^{(2)} \3bar_{\bV_{h, 0}^1} ^2= \sum_{T\in {\cal
T}_h} \|\nabla_w \times \textbf{v}_q^{(2)}\|^2_T + h_T^{-1}\|h
\bv_{q,b}\cdot\bn \|^2_{\partial T} + h_T^{-1}\|h \bv_{q,b}\times\bn
\|^2_{\pT}.
\end{equation}
Since $\bv_{q,b}$ is parallel to $\bn$, then $\bv_{q,b}\times\bn=0$
on $\pT$. In addition,  (\ref{2.5}) of the discrete weak curl
implies $\nabla_w \times \textbf{v}_q^{(2)}=0$, since
$$
(\nabla_w \times \textbf{v}_q^{(2)}, \varphi)_T = (0,
\curl\varphi)_T- h \langle \bv_{q,b}\times\bn,
\varphi\rangle_{\pT}=0 ,\quad \forall \ \varphi\in [P_{k-1}(T)]^d.
$$
Thus, it follows from (\ref{vq2-new}) and (\ref{November-30:500})
that
$$
\3bar \textbf{v}_q^{(2)} \3bar_{\bV_{h, 0}^1} ^2 \leqC h \sum_{e\in
\E_h} \|\jump{q}\|_e^2,
$$
which verifies the estimate (\ref{6.5}) for $\textbf{v}_q^{(2)}$.
This completes the proof of the lemma.
\end{proof}

For the bilinear form $b_2(\cdot,\cdot)$, we may follow the same
spirit of Lemma \ref{lvq2} to derive an {\em inf-sup} condition. For
completeness, we present all the necessary details as follows. For
any $q\in W^2_h$, define a finite element function
 $\bv_q=\{-h^2\nabla q;\ h
\bv_{q,b}\}\in \bV_{h,0}^2$, where
\begin{equation}\label{November-30:500-2}
\bv_{q,b}=\left\{
\begin{array}{ll}
\jump{q}\ \bn_e,&\qquad \mbox{ on } \ e\in\E_h^0,\\
0,&\qquad \mbox{ on } \ e\in \E_h\cap \Gamma.
\end{array}
\right.
\end{equation}

Note that for any $\bv=\{\bv_0; \bv_b\}\in \bV_{h,0}^2$, from
(\ref{2.2}) of weak divergence, we have
\begin{equation}\label{November.30.001-new-2}
\begin{split}
b_2(\bv,q) & = \sum_{T\in {\cal T}_h} (\nabla_w \cdot ( \mu\bv),q)_T\\
& = \sum_{T\in {\cal T}_h} -( \mu\bv_0,\nabla q)_T+\langle \bv_b
\cdot \bn, q\rangle_{\partial T}\\
& = - \sum_{T\in {\cal T}_h} ( \mu\bv_0,\nabla q)_T +
\sum_{e\in\E_h^0} \langle \bv_b\cdot\bn_e, \jump{q}\rangle_e.
\end{split}
\end{equation}

\begin{lemma} \emph{(inf-sup condition for WG algorithm \ref{algo2})} \label{lvq2-2}
For any $q\in W_h^2$, there exists a finite element function
$\textbf{v}_q\in \bV_{h,0}^2$ such that
\begin{eqnarray}\label{November:29:801-2}
b_2(\bv_q,q) &=& h^2\sum_{T\in {\cal T}_h} ( \mu\nabla q,\nabla q)_T
+
h\sum_{e\in \E_h^0} \|\jump{q}\|_e^2 , \\
\label{6.5-2} \3bar\bv_q\3bar_{\bV_{h, 0}^2}  &\leqC &\|q\|_{W_h^2}.
\end{eqnarray}
\end{lemma}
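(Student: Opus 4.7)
The plan is to mimic the construction in Lemma \ref{lvq2}, with adjustments to account for the different test space $\bV_{h,0}^2$, the coefficient $\mu$ in place of $\varepsilon$, and the mean-zero constraint on $W_h^2$. Given $q\in W_h^2$, I would set $\bv_q=\{-h^2\nabla q;\ h\bv_{q,b}\}$ with $\bv_{q,b}$ as in (\ref{November-30:500-2}). The crucial observation is that $\bv_{q,b}$ vanishes on boundary edges, so $\bv_{q,b}\cdot\bn=0$ on $\Gamma$, and therefore $\bv_q$ indeed lies in $\bV_{h,0}^2$. This choice of boundary value is exactly what is needed because the boundary edges do not appear in the norm $\|q\|_{W_h^2}$ nor in the sum in (\ref{November.30.001-new-2}).

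To prove the identity (\ref{November:29:801-2}), I would substitute $\bv=\bv_q$ directly into the formula (\ref{November.30.001-new-2}). The first term produces $h^2\sum_T(\mu\nabla q,\nabla q)_T$, and the second term collapses to $h\sum_{e\in\E_h^0}\|\jump{q}\|_e^2$ because on interior edges $\bv_{q,b}\cdot\bn_e=\jump{q}$, while boundary edges contribute zero by definition of $\bv_{q,b}$.

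For the norm bound (\ref{6.5-2}), I would decompose $\bv_q=\bv_q^{(1)}+\bv_q^{(2)}$ with $\bv_q^{(1)}=\{-h^2\nabla q;\ 0\}$ and $\bv_q^{(2)}=\{0;\ h\bv_{q,b}\}$ and estimate each piece separately using (\ref{Eq:bar-norm-2}). For $\bv_q^{(1)}$, the discrete weak curl estimate follows from (\ref{2.5}) combined with the standard inverse inequality, giving $\|\nabla_w\times\bv_q^{(1)}\|_T\lesssim h\|\nabla q\|_T$; the remaining volume and face terms are controlled by $h^2\|\nabla q\|_T^2$ via the trace inequality, which accounts for the $h^2(\mu\nabla q,\nabla q)_T$ contribution in $\|q\|_{W_h^2}$. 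For $\bv_q^{(2)}$, since $\bv_{q,b}$ is parallel to $\bn_e$ on interior edges and vanishes on boundary edges, the tangential face term vanishes and the discrete weak curl is zero by (\ref{2.5}); only the normal-jump face term survives, producing $h\sum_{e\in\E_h^0}\|\jump{q}\|_e^2$.

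The only place where I would need to be slightly careful is the treatment of boundary edges: one must verify that $\bv_q^{(1)}$ alone does not violate the zero-normal-trace constraint of $\bV_{h,0}^2$. This is fine because $\bv_q^{(1)}$ has its boundary component $\bv_b$ equal to zero by construction, so membership in $\bV_{h,0}^2$ is automatic for both pieces of the decomposition. Beyond this bookkeeping, the argument is a straightforward transcription of the proof of Lemma \ref{lvq2}, so I do not anticipate any substantive obstacle.
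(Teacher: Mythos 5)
Your proposal is correct and follows essentially the same route as the paper: the same choice $\bv_q=\{-h^2\nabla q;\ h\bv_{q,b}\}$ with $\bv_{q,b}$ from (\ref{November-30:500-2}), the same substitution into (\ref{November.30.001-new-2}) for the identity, and the same decomposition $\bv_q=\bv_q^{(1)}+\bv_q^{(2)}$ with the inverse and trace inequalities for the first piece and the vanishing of the tangential trace and discrete weak curl for the second. Your extra remark about membership of each piece in $\bV_{h,0}^2$ is correct bookkeeping that the paper leaves implicit.
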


\begin{proof} For any $q\in W_h^2$, we define $\bv_{q,b}$ by
(\ref{November-30:500-2}) and set $\textbf{v}_q = \{-h^2\nabla q;\ h
\bv_{q,b}\}$. It is easy to see that $\bv_q\in\bV_{h,0}^2$. By
letting $\bv=\bv_q$ in (\ref{November.30.001-new-2}) we arrive at
\begin{equation*}
b_2(\bv_q,q)=h^2\sum_{T\in {\cal T}_h} ( \mu\nabla q,\nabla q)_T +
h\sum_{e\in \E_h^0} \|\jump{q}\|_e^2,
\end{equation*}
which verifies the identity (\ref{November:29:801-2}).

To derive (\ref{6.5-2}), we consider the following decomposition
$$
\bv_q = \bv_q^{(1)} + \bv_q^{(2)},
$$
where $\bv_q^{(1)} = -\{h^2\nabla q;\ 0\}$ and $\bv_q^{(2)} = \{0;\
h\bv_{q,b}\}$. It suffices to establish (\ref{6.5-2}) for
$\bv_q^{(1)}$ and $\bv_q^{(2)}$ respectively. From
(\ref{Eq:bar-norm-2}), we have
\begin{equation}\label{vq2-2}
\begin{split}
\3bar \textbf{v}_q^{(1)} \3bar_{\bV_{h, 0}^2} ^2 = & \sum_{T\in
{\cal T}_h}
\|\nabla_w \times \textbf{v}_q^{(1)}\|^2_T+ \|   h^2 \nabla q\|^2_T \\
& + h_T^{-1}\|h^2  \mu\nabla q\cdot\bn \|^2_{\partial T} +
h_T^{-1}\|h^2\nabla q \times\bn\|^2_\pT.
\end{split}
\end{equation}
The definition (\ref{2.5}) for the discrete weak curl implies
$$
(\nabla_w \times \textbf{v}_q^{(1)}, \varphi)_T = -h^2 (\nabla q,
\curl\varphi)_T,\qquad \forall \ \varphi\in [P_{k-1}(T)]^d.
$$
It follows from the inverse inequality (\ref{Div-Curl:inverse}) that
$$
\|\nabla_w \times \textbf{v}_q^{(1)}\|_T \leqC h \|\nabla q\|_T.
$$
Substituting the above into (\ref{vq2-2}) and then using the trace
inequality (\ref{Aa-trace}) yields
$$
\3bar \textbf{v}_q^{(1)} \3bar_{\bV_{h, 0}^2} ^2 \leqC h^2\|\nabla
q\|_T^2,
$$
which verifies the estimate (\ref{6.5-2}) for $\textbf{v}_q^{(1)}$.

For $\textbf{v}_q^{(2)}$, we again use (\ref{Eq:bar-norm-2}) to
obtain
\begin{equation}\label{vq2-new-2}
\3bar \textbf{v}_q^{(2)} \3bar_{\bV_{h, 0}^2} ^2= \sum_{T\in {\cal
T}_h} \|\nabla_w \times \textbf{v}_q^{(2)}\|^2_T + h_T^{-1}\|h
\bv_{q,b}\cdot\bn \|^2_{\partial T} + h_T^{-1}\|h \bv_{q,b}\times\bn
\|^2_{\pT}.
\end{equation}
Since $\bv_{q,b}$ is parallel to $\bn$, then $\bv_{q,b}\times\bn=0$
on $\pT$. In addition, the definition (\ref{2.5}) for the discrete
weak curl implies $\nabla_w \times \textbf{v}_q^{(2)}=0$, since
$$
(\nabla_w \times \textbf{v}_q^{(2)}, \varphi)_T = (0,
\curl\varphi)_T- h \langle \bv_{q,b}\times\bn,
\varphi\rangle_{\pT}=0 ,\quad \forall \ \varphi\in [P_{k-1}(T)]^d.
$$
Thus, it follows from (\ref{vq2-new-2}) and
(\ref{November-30:500-2}) that
$$
\3bar \textbf{v}_q^{(2)} \3bar_{\bV_{h, 0}^2}  ^2 \leqC h
\sum_{e\in \E_h^0} \|\jump{q}\|_e^2,
$$
which verifies the estimate (\ref{6.5-2}) for $\textbf{v}_q^{(2)}$.
This completes the proof of the lemma.
\end{proof}

Using the general result of Babu\u{s}ka \cite{babuska} and Brezzi
\cite{brezzi} we obtain the following result on the solution
existence and uniqueness for our WG finite element algorithms.

\begin{theorem} The weak Galerkin algorithm \ref{algo1} or the system of equations (\ref{3.3})-(\ref{3.4})
has a unique solution. The same conclusion can be drawn for the weak
Galerkin algorithm \ref{algo2} or the system of equations
(\ref{3.5})-(\ref{3.6}).
\end{theorem}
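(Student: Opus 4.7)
My plan is to observe that since the WG systems \eqref{3.3}--\eqref{3.4} and \eqref{3.5}--\eqref{3.6} are square finite-dimensional linear systems, existence is equivalent to uniqueness, so it suffices to show that the only solution of the homogeneous problem is the trivial one. The three ingredients needed are exactly those that the preceding lemmas have supplied: boundedness of $a_j(\cdot,\cdot)$ and $b_j(\cdot,\cdot)$ in the respective mesh-dependent norms (an easy consequence of Cauchy--Schwarz together with the $L^\infty$ bounds on $\mu$, $\varepsilon$, $\sigma$, and $(i\omega\varepsilon+\sigma)^{-1}$), the coercivity estimates \eqref{EQ:aone-coercivity}--\eqref{EQ:atwo-coercivity}, and the inf-sup conditions from Lemmas \ref{lvq2}--\ref{lvq2-2}. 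Thus the Babu\v{s}ka--Brezzi framework applies in the standard way; alternatively, one can bypass the abstract citation and give a direct argument, which I now describe.

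I would carry out the direct argument for Algorithm \ref{algo1}; the argument for Algorithm \ref{algo2} is verbatim identical upon replacing $a_1,b_1,W_h^1,\bV_{h,0}^1$ by $a_2,b_2,W_h^2,\bV_{h,0}^2$. Setting $\textbf{j}_e\equiv 0$ and $\rho\equiv 0$, take $\textbf{v}=\textbf{E}_h$ in \eqref{3.3} and $w=p_h$ in \eqref{3.4} and subtract to obtain
\begin{equation*}
a_1(\textbf{E}_h,\textbf{E}_h)=b_1(\textbf{E}_h,p_h)=0.
\end{equation*}
The coercivity bound \eqref{EQ:aone-coercivity} forces $\3bar\textbf{E}_h\3bar_{\bV_{h,0}^1}=0$, hence $\textbf{E}_h=0$. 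Feeding this back into \eqref{3.3} gives $b_1(\textbf{v},p_h)=0$ for every $\textbf{v}\in \bV_{h,0}^1$; picking $\textbf{v}=\textbf{v}_{p_h}$ as constructed in Lemma \ref{lvq2} (applied with $q=p_h$) yields
\begin{equation*}
0=b_1(\textbf{v}_{p_h},p_h)=h^2\sum_{T\in\T_h}(\varepsilon\nabla p_h,\nabla p_h)_T+h\sum_{e\in\E_h}\|\jump{p_h}\|_e^2=\|p_h\|_{W_h^1}^2,
\end{equation*}
which, provided $\|\cdot\|_{W_h^1}$ is a genuine norm on $W_h^1$, forces $p_h=0$ and completes the uniqueness argument.

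The only point requiring care---what I would flag as the main, if modest, obstacle---is checking that $\|\cdot\|_{W_h^j}$ is in fact a norm rather than a seminorm on its space. If $\|q\|_{W_h^1}=0$ then $\nabla q=0$ on each $T$ and $\jump{q}=0$ on every $e\in\E_h$, including boundary faces where $\jump{q}=q|_e$ by \eqref{jump}; hence $q$ is a global constant with vanishing boundary trace, so $q\equiv 0$. For $\|\cdot\|_{W_h^2}$ the boundary jumps are absent from the definition, but the constraint $W_h^2\subset L_0^2(\Omega)$ rules out nonzero global constants, and the same conclusion follows. Once this norm property is in hand, the entire argument transports to Algorithm \ref{algo2}, and the theorem is proved.
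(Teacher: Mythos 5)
Your proposal is correct, and it does more than the paper does: the paper's entire proof is a one-sentence appeal to the abstract Babu\v{s}ka--Brezzi theory, resting on the coercivity estimates \eqref{EQ:aone-coercivity}--\eqref{EQ:atwo-coercivity} and the inf-sup constructions of Lemmas \ref{lvq2}--\ref{lvq2-2}, exactly as in your first paragraph. Your direct argument is a genuinely more elementary route that avoids the abstract citation altogether: since \eqref{3.3}--\eqref{3.4} is a square finite-dimensional linear system, it suffices to kill the homogeneous solution, and your chain $a_1(\textbf{E}_h,\textbf{E}_h)=0\Rightarrow \3bar\textbf{E}_h\3bar_{\bV_{h,0}^1}=0\Rightarrow \textbf{E}_h=0$, followed by testing $b_1(\cdot,p_h)=0$ with the inf-sup element $\textbf{v}_{p_h}$ to get $\|p_h\|_{W_h^1}=0$, is exactly the content that the Babu\v{s}ka--Brezzi machinery packages. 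What your version buys is transparency about which hypotheses are actually used and where; what it costs is that you must verify two norm properties that the abstract framework presupposes, and you correctly identified and discharged both: that $\3bar\cdot\3bar_{\bV_{h,0}^j}$ is a norm (the $\|\bv_0\|_T$ term forces $\bv_0=0$ and the two stabilizer terms then force $\bv_b\cdot\bn=\bv_b\times\bn=0$, hence $\bv_b=0$), and that $\|\cdot\|_{W_h^j}$ is a norm rather than a seminorm --- the latter being the one place where the distinction between $\E_h$ in \eqref{Eq:bar-norm-q1} (boundary jumps included, so the trace condition kills constants) and $\E_h^0$ in \eqref{Eq:bar-norm-q2} (compensated by $W_h^2\subset L_0^2(\Omega)$) actually matters. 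This check is implicit but never spelled out in the paper, so your treatment is a useful complement rather than a redundancy.
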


\section{Error Equations}\label{Section:sec7}
In this section we shall establish two error equations for the weak
Galerkin algorithms \ref{algo1} and \ref{algo2}. These error
equations will be used for deriving error estimates for the
resulting numerical schemes.

Let $Q_0$ be the $L^2$ projection onto $[P_k(T)]^d, \ T\in {\cal
T}_h$, and $Q_b$ be the $L^2$ projection onto $[P_{k}(e)]^d, \ e\in
\partial T\cap \E_h$. Denote by $Q_h$ the $L^2$ projection onto the
weak finite element space $\bV_h$ such that on each element $T\in
{\cal T}_h$,
\begin{equation}\label{EQ:12-31:001}
(Q_h \textbf{u})|_T= \{Q_{0}\textbf{u}, \mathds{Q}_{b} \textbf{u}\},
\end{equation}
where
\begin{equation}\label{EQ:12-31:002}
\mathds{Q}_{b} \textbf{u} = Q_b ( \varepsilon\bu\cdot\bn)\bn + Q_b
(\bn\times(\bu\times\bn)).
\end{equation}
Observe that $\bn\times(\bu\times\bn)=\bu-(\bu\cdot\bn)\bn$ is the
tangential component of the vector $\bu$ on the boundary of the
element. In the case of $ \varepsilon=I$,
$(\varepsilon\bu\cdot\bn)\bn$ is clearly the normal component of the
vector $\bu$. But for general $\varepsilon$,
$(\varepsilon\bu\cdot\bn)\bn +\bn\times(\bu\times\bn)$ is not a
decomposition of the vector $\bu$ on $\partial T$.

Denote by ${\cal Q}_h$ and $\textbf{Q}_h$ the $L^2$ projections onto
$P_{k-1}(T)$ and $[P_{k-1}(T)]^{d}$, respectively.

\begin{lemma}\cite{mwyz, cwang1, wy1302} \label{lemma4.2} The $L^2$
projection operators $Q_h$, $\textbf{Q}_h$, and ${\cal Q}_h$ satisfy
the following commutative identities:
\begin{align}\label{4.4}
\nabla_w \cdot( \varepsilon Q_h \textbf{v})=&{\cal Q}_h( \nabla \cdot
( \varepsilon\textbf{v})), \qquad \textbf{v}\in  H({\rm div}_ \varepsilon;\Omega),\\
\nabla_w \cdot( \mu Q_h \textbf{v})=&{\cal Q}_h( \nabla \cdot
( \mu\textbf{v})), \qquad \textbf{v}\in  H({\rm div}_ \mu;\Omega),\label{4.4-2}\\
\nabla_w \times (Q_h \textbf{v})=& \textbf{Q}_h (\nabla \times
\textbf{v}), \qquad \textbf{v}\in  H({\rm curl};\Omega).\label{4.5}
\end{align}
\end{lemma}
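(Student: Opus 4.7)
The plan is to verify each identity by testing both sides against an arbitrary polynomial from the appropriate test space and invoking the definitions (\ref{2.2}) and (\ref{2.5}) of the discrete weak operators together with the defining orthogonality of the $L^2$ projections $Q_0$, $Q_b$, $\mathcal{Q}_h$ and $\mathbf{Q}_h$. The two divergence identities (\ref{4.4}) and (\ref{4.4-2}) are structurally identical (replace $\varepsilon$ by $\mu$), so I only plan the first in detail; the curl identity (\ref{4.5}) needs a separate argument owing to the cross product in the boundary term.

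For (\ref{4.4}), I fix $T\in\T_h$ and an arbitrary $\varphi\in P_{k-1}(T)$ and evaluate the defining relation (\ref{2.2}) for the weak divergence of $\varepsilon Q_h\bv$:
\begin{equation*}
(\nabla_w\cdot(\varepsilon Q_h\bv),\varphi)_T = -(\varepsilon Q_0\bv,\nabla\varphi)_T + \langle \mathds{Q}_b\bv\cdot\bn,\varphi\rangle_{\partial T}.
\end{equation*}
Since $\varepsilon$ is piecewise constant on $T$ and $\nabla\varphi\in [P_{k-2}(T)]^d\subset [P_k(T)]^d$, the $L^2$-orthogonality of $Q_0$ gives $(\varepsilon Q_0\bv,\nabla\varphi)_T=(\varepsilon\bv,\nabla\varphi)_T$. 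For the boundary piece I use (\ref{EQ:12-31:002}): the tangential summand $Q_b(\bn\times(\bv\times\bn))$ is perpendicular to the (constant on each flat face) normal $\bn$, hence its normal trace is zero, and what remains is $\mathds{Q}_b\bv\cdot\bn=Q_b(\varepsilon\bv\cdot\bn)$. Since $\varphi|_e\in P_{k-1}(e)\subset P_k(e)$, the $L^2$-orthogonality of $Q_b$ collapses $\langle Q_b(\varepsilon\bv\cdot\bn),\varphi\rangle_{\partial T}=\langle\varepsilon\bv\cdot\bn,\varphi\rangle_{\partial T}$. A single integration by parts then yields $(\nabla_w\cdot(\varepsilon Q_h\bv),\varphi)_T=(\nabla\cdot(\varepsilon\bv),\varphi)_T$, and as $\nabla_w\cdot(\varepsilon Q_h\bv)\in P_{k-1}(T)$ this precisely characterizes it as $\mathcal{Q}_h(\nabla\cdot(\varepsilon\bv))$.

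For (\ref{4.5}), I test against $\varphi\in [P_{k-1}(T)]^d$ in the definition (\ref{2.5}):
\begin{equation*}
(\nabla_w\times Q_h\bv,\varphi)_T = (Q_0\bv,\nabla\times\varphi)_T - \langle \mathds{Q}_b\bv\times\bn,\varphi\rangle_{\partial T}.
\end{equation*}
The interior term again reduces to $(\bv,\nabla\times\varphi)_T$ by $L^2$-orthogonality of $Q_0$ since $\nabla\times\varphi\in [P_{k-2}(T)]^d$. The main subtlety is the boundary term: the normal-component piece $Q_b(\varepsilon\bv\cdot\bn)\bn$ of $\mathds{Q}_b\bv$ is annihilated by the cross product with $\bn$, so on each face $e$ only $Q_b(\bn\times(\bv\times\bn))\times\bn$ survives. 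Using the vector identity $\bn\times(\bv\times\bn)\times\bn=\bv\times\bn$ together with the adjoint relation $\langle \mathbf{a}\times\bn,\varphi\rangle_e=\langle \mathbf{a},\bn\times\varphi\rangle_e$ and the observation that $\bn\times\varphi\in[P_{k-1}(e)]^d\subset[P_k(e)]^d$ (so $Q_b$ may be removed by its defining orthogonality), I obtain $\langle\mathds{Q}_b\bv\times\bn,\varphi\rangle_{\partial T}=\langle\bv\times\bn,\varphi\rangle_{\partial T}$. A curl-integration-by-parts then produces $(\nabla_w\times Q_h\bv,\varphi)_T=(\nabla\times\bv,\varphi)_T$, and since $\nabla_w\times Q_h\bv\in[P_{k-1}(T)]^d$ this identifies it with $\mathbf{Q}_h(\nabla\times\bv)$.

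The main obstacle, and the only nontrivial bookkeeping, is the curl boundary identity: one must recognize that the hybrid decomposition $\mathds{Q}_b\bv=Q_b(\varepsilon\bv\cdot\bn)\bn+Q_b(\bn\times(\bv\times\bn))$, which is \emph{not} a genuine decomposition of $\bv$ when $\varepsilon\neq I$, nevertheless reproduces $\bv$ correctly when crossed with $\bn$ (because the normal piece is killed) and when dotted with $\bn$ (because the tangential piece is killed). This is the structural reason the definition (\ref{EQ:12-31:002}) of $\mathds{Q}_b$ was tailored the way it was, and it is precisely what makes both identities go through with the same projection operator $Q_h$. Flatness of each edge/face (so that $\bn$ is constant on $e$ and cross-products commute with $Q_b$) is implicit in the shape-regularity assumption on $\T_h$.
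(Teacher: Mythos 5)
Your argument is correct and is exactly the standard verification: test the discrete weak operator applied to $Q_h\bv$ against an arbitrary polynomial, use the defining orthogonality of $Q_0$, $Q_b$ (with $\bn$ constant on each flat face so that dotting/crossing with $\bn$ commutes with $Q_b$ and kills the unwanted summand of $\mathds{Q}_b\bv$), and integrate by parts. The paper itself gives no proof of this lemma, citing \cite{mwyz, cwang1, wy1302} instead, and your derivation matches the one found in those references.
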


Let $(\bu_h;p_h)=(\{\bu_0, \bu_b\};p_h)$ be the WG finite element
solution arising from either the weak Galerkin Algorithm
(\ref{3.3})-(\ref{3.4}) or (\ref{3.5})-(\ref{3.6}), and $(\bu;p)$ be
the solution of the continuous model problem (\ref{maxwell1}) or
(\ref{maxwell2}). Here the variable $\bu$ represents either the
electric field intensity $\bE$ or the magnetic field intensity
$\bH$. The corresponding error functions are given as follows
\begin{align}\label{5.1}
\textbf{e}_h&=\{\textbf{e}_0,\textbf{e}_b\}=\{Q_0\textbf{u}-\textbf{u}_0,
\mathds{Q}_{b}\textbf{u}-\textbf{u}_b\},\\
 \epsilon_h&={\cal Q}_h p-p_h.\label{ph}
\end{align}

\begin{lemma}\label{lemma5.1}
 Assume that $(\bw\ ; \rho)\in  (H_0({\rm curl};\Omega)\cap H(\rm div_\varepsilon; \Omega) )\times
L^2(\Omega)$ is sufficiently smooth on each element $T\in \T_h$ and
satisfies
\begin{eqnarray}
 \nabla\times(\mu^{-1}\nabla \times \bw)+(i \omega \sigma -\omega^2 \varepsilon) \bw +
 \varepsilon\nabla \rho &= &\eta,\qquad
 \mbox{in}\ \Omega, \label{5.2}\\
 \rho&=& 0,\qquad \mbox{on} \ \Gamma.
\label{November.28.100}
\end{eqnarray}
Then, the following identity holds true:
\begin{equation}\label{Div-Curl:Feb9:300-new}
\begin{split}
&\sum_{T\in {\cal T}_h}(\mu^{-1}\nabla_w \times (Q_h\bw),\nabla_w \times\textbf{v}
)_T+ ((i \omega \sigma -\omega^2 \varepsilon )Q_0 \bw,\bv_0)_T\\
&-(\nabla_w\cdot ( \varepsilon\textbf{v}),{\cal
Q}_h\rho)_T=(\eta,\textbf{v}_0)+l_\bw
(\textbf{v})-\theta_\rho(\textbf{v}),
\end{split}
\end{equation}
for all $\textbf{v}\in \bV_{h,0}^1$. Here $l_\bw(\textbf{v})$ and
$\theta_\rho(\textbf{v})$ are two  functionals in the linear space
$ \bV_{h,0}^1$ given by
\begin{align}\label{l}
l_\bw(\textbf{v})&= \sum_{T\in{\cal T}_h}\langle
(\textbf{Q}_h-I)(\mu^{-1}\nabla \times\bw),
(\textbf{v}_0-\textbf{v}_b)\times \textbf{n}\rangle_{\partial T},\\
\theta_\rho(\textbf{v})&=\sum_{T\in{\cal T}_h}\langle \rho- {\cal
Q}_h\rho,
 ( \varepsilon\textbf{v}_0-\textbf{v}_b)\cdot\bn\rangle_{\partial T}.\label{theta}
\end{align}
\end{lemma}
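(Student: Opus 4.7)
The plan is to transform the left-hand side of (\ref{Div-Curl:Feb9:300-new}) into the right-hand side term by term, using the commutative identity (\ref{4.5}) and the ``smooth-data'' integration-by-parts formulas (\ref{2.2new}) and (\ref{2.5new}), and then substituting the strong PDE (\ref{5.2}).

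For the weak-curl term, I would first apply (\ref{4.5}) to rewrite $\nabla_w\times Q_h\bw=\textbf{Q}_h(\nabla\times\bw)$; because $\mu$ is piecewise constant, $\mu^{-1}$ commutes with $\textbf{Q}_h$, and because $\nabla_w\times\textbf{v}\in[P_{k-1}(T)]^d$ the outer $\textbf{Q}_h$ can be absorbed against it. I then invoke (\ref{2.5new}) with $\varphi=\textbf{Q}_h(\mu^{-1}\nabla\times\bw)$ to convert $\nabla_w\times\textbf{v}$ into $\nabla\times\textbf{v}_0$ plus an $(\textbf{v}_b-\textbf{v}_0)\times\bn$ boundary contribution. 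Adding and subtracting $\mu^{-1}\nabla\times\bw$ in that boundary term splits off the projection error $(\textbf{Q}_h-I)(\mu^{-1}\nabla\times\bw)$; summed over $T$ this error is exactly $l_\bw(\textbf{v})$ as defined in (\ref{l}). A further classical integration by parts on $(\nabla\times\textbf{v}_0,\mu^{-1}\nabla\times\bw)_T$ produces $(\textbf{v}_0,\nabla\times(\mu^{-1}\nabla\times\bw))_T$ plus a boundary term that combines with the previous one into $-\sum_T\langle\textbf{v}_b\times\bn,\mu^{-1}\nabla\times\bw\rangle_{\pT}$. This sum vanishes: on interior edges $\textbf{v}_b$ is single-valued while the tangential trace of $\mu^{-1}\nabla\times\bw$ is continuous (thanks to $\mu^{-1}\nabla\times\bw\in H(\text{curl};\Omega)$ implied by (\ref{5.2})), and on $\Gamma$ because $\textbf{v}_b\times\bn=0$ for $\textbf{v}\in\bV_{h,0}^1$.

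Next, I substitute (\ref{5.2}) to replace $\nabla\times(\mu^{-1}\nabla\times\bw)$ by $\eta-(i\omega\sigma-\omega^2\varepsilon)\bw-\varepsilon\nabla\rho$. Since $i\omega\sigma-\omega^2\varepsilon$ is piecewise constant and symmetric, one has $((i\omega\sigma-\omega^2\varepsilon)\bw,\textbf{v}_0)_T=((i\omega\sigma-\omega^2\varepsilon)Q_0\bw,\textbf{v}_0)_T$, absorbing the second term of the LHS. For the remaining piece $-(\nabla_w\cdot(\varepsilon\textbf{v}),{\cal Q}_h\rho)_T$, I apply (\ref{2.2new}) with $\varphi={\cal Q}_h\rho$; since $\nabla\cdot(\varepsilon\textbf{v}_0)\in P_{k-1}(T)$, the projection on $\rho$ drops off against it, and a standard integration by parts extracts $(\textbf{v}_0,\varepsilon\nabla\rho)_T$, which cancels the $\varepsilon\nabla\rho$ contribution generated by the PDE substitution. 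The sole residue is a pair of boundary terms; rewriting $\sum_T\langle\varepsilon\textbf{v}_0\cdot\bn,\rho\rangle_{\pT}=\sum_T\langle(\varepsilon\textbf{v}_0-\textbf{v}_b)\cdot\bn,\rho\rangle_{\pT}$ (legal because $\sum_T\langle\textbf{v}_b\cdot\bn,\rho\rangle_{\pT}=0$ by single-valuedness of $\textbf{v}_b$ across interior edges and $\rho|_\Gamma=0$), these consolidate into $-\sum_T\langle\rho-{\cal Q}_h\rho,(\varepsilon\textbf{v}_0-\textbf{v}_b)\cdot\bn\rangle_{\pT}=-\theta_\rho(\textbf{v})$.

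The main obstacle is the sign and boundary bookkeeping when merging the classical IBP boundary term from the curl with the $(\textbf{v}_b-\textbf{v}_0)\times\bn$ term generated by (\ref{2.5new}), and analogously on the divergence/gradient side. The cancellation of inter-element edge sums is not automatic; it hinges on three structural ingredients working together, namely the tangential continuity of $\mu^{-1}\nabla\times\bw$ across interior interfaces, the essential condition $\textbf{v}_b\times\bn=0$ on $\Gamma$ built into $\bV_{h,0}^1$, and the homogeneous Dirichlet condition $\rho|_\Gamma=0$ from (\ref{November.28.100}), combined with the single-valuedness of $\textbf{v}_b$ on every $e\in\E_h^0$.
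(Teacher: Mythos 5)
Your proposal is correct and follows essentially the same route as the paper's proof: apply the commutativity identity (\ref{4.5}) together with (\ref{2.5new}) tested against $\textbf{Q}_h(\mu^{-1}\nabla\times\bw)$, split off $(\textbf{Q}_h-I)$ to produce $l_\bw(\bv)$, integrate by parts elementwise and cancel the $\langle\bv_b\times\bn,\mu^{-1}\curl\bw\rangle_{\pT}$ sums using single-valuedness of $\bv_b$ and the boundary condition in $\bV_{h,0}^1$, then treat the divergence term via (\ref{2.2new}) and the condition $\rho|_\Gamma=0$ to extract $\theta_\rho(\bv)$. The only differences are cosmetic (order of applying (\ref{4.5}) versus (\ref{2.5new}), and making explicit the tangential continuity of $\mu^{-1}\nabla\times\bw$ and the identity $((i\omega\sigma-\omega^2\varepsilon)Q_0\bw,\bv_0)_T=((i\omega\sigma-\omega^2\varepsilon)\bw,\bv_0)_T$, which the paper leaves implicit).
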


\begin{proof} Recall that  $\mu$, $\sigma$, $\omega$ and $\varepsilon$ are assumed to be
piecewise constants on the domain $\Omega$ with respect to the given
finite element partition. Thus, from (\ref{2.5new}) with $\varphi=
\mu^{-1}\nabla_w \times (Q_h\textbf{w})$ we have
\begin{equation}\label{Eq:Feb8:500}\nonumber
\begin{split}
 (\nabla_w \times & \textbf{v},  \mu^{-1}\nabla_w \times
(Q_h\textbf{w}))_T = \\
 & \ (\nabla \times \textbf{v}_0,  \mu^{-1}\nabla_w \times (Q_0\textbf{w}))
_T -\langle (\textbf{v}_b-\textbf{v}_0)\times\bn,
 \mu^{-1}\nabla_w \times (Q_h\textbf{w})\rangle_{\partial T}.
\end{split}
\end{equation}
Using (\ref{4.5}), the above equation can be rewritten as
\begin{equation}\label{5.4}\nonumber
\begin{split}
( \mu^{-1}\nabla_w \times & (Q_h\textbf{w}),\nabla_w \times \textbf{v}
)_T = \\
  & ( \mu^{-1}\nabla\times \bw, \nabla \times \bv_0)_T+\langle
 \textbf{Q}_h (\mu^{-1}\nabla\times \bw), (\bv_0-\bv_b)\times \bn
\rangle_{\partial T}.
\end{split}
\end{equation}
Applying the integration by parts to the first term on the
right-hand side yields
\begin{equation}\label{5.4.001}
\begin{split}
&( \mu^{-1}\nabla_w \times  (Q_h\textbf{w}),\nabla_w \times\textbf{v}
)_T+((i \omega \sigma -\omega^2 \varepsilon )Q_0 \bw,\bv_0)_T \\
 = & (\curl(\mu^{-1}\curl\bw), \bv_0)_T  - \langle \mu^{-1}\curl \bw, \bv_0\times\bn\rangle_\pT\\
 & +\langle \textbf{Q}_h (\mu^{-1}\nabla\times \bw), (\bv_0-\bv_b)\times \bn
\rangle_{\partial T}
+((i \omega \sigma -\omega^2 \varepsilon )  \bw,\bv_0)_T\\
= & (\curl(\mu^{-1}\curl\bw), \bv_0)_T - \langle \mu^{-1}\curl \bw,
\bv_b\times\bn\rangle_\pT\\
& + \langle (\textbf{Q}_h-I) (\mu^{-1}\nabla\times \bw),
(\bv_0-\bv_b)\times \bn \rangle_{\partial T}+((i \omega \sigma -\omega^2 \varepsilon )  \bw,\bv_0)_T.
\end{split}
\end{equation}

Using (\ref{2.2new}) with $\varphi={\cal Q}_h \rho$ and the usual
integration by parts, we obtain
\begin{equation}\label{Eq:Feb8:501}
\begin{split}
&\ (\nabla_w\cdot( \varepsilon\bv),{\cal Q}_h \rho)_T\\
  = &\ (\nabla \cdot( \varepsilon\bv_0), {\cal Q}_h \rho)_T+\langle
(\bv_b- \varepsilon\bv_0)\cdot\bn,{\cal Q}_h
\rho\rangle_{\partial T}\\
 =&\ (\nabla \cdot ( \varepsilon\bv_0), \rho)_T+\langle
(\bv_b- \varepsilon\bv_0)\cdot\bn,{\cal
Q}_h\rho\rangle_{\partial T}\\
  =&\ -( \varepsilon\bv_0, \nabla \rho)_T+\langle  \varepsilon\bv_0\cdot\bn,
\rho\rangle_{\partial T} +\langle (\bv_b- \varepsilon\bv_0)\cdot\bn,{\cal
Q}_h
\rho\rangle_{\partial T}\\
 =&\ -(\bv_0,  \varepsilon\nabla \rho)_T+\langle
(\bv_b- \varepsilon\bv_0)\cdot\bn,{\cal Q}_h \rho-\rho\rangle_{\partial T}
+\langle \bv_b\cdot\bn, \rho\rangle_{\partial T}.
\end{split}
\end{equation}

Summing (\ref{5.4.001}) over all the elements $T\in\T_h$ yields
\begin{equation}\label{5.4.002}
\begin{split}
&\sum_{T\in {\cal T}_h}( \mu^{-1}\nabla_w \times  (Q_h\textbf{w}),\nabla_w \times \textbf{v}
)_T + ((i \omega \sigma -\omega^2 \varepsilon )Q_0 \bw,\bv_0)_T\\
=& \sum_{T\in\T_h} (\curl(\mu^{-1}\curl\bw), \bv_0)_T +  \langle
 (\textbf{Q}_h-I) (\mu^{-1}\nabla\times \bw), (\bv_0-\bv_b)\times \bn
\rangle_{\partial T}\\
&+  ((i \omega \sigma -\omega^2 \varepsilon ) \bw,\bv_0)_T,
\end{split}
\end{equation}
where we have used two properties: (1) the cancelation property for
the boundary integrals on interior edges/faces, and (2) the fact
that $\bv_b\times\bn=0$ on $\Gamma$. Similarly, summing
(\ref{Eq:Feb8:501}) over all the elements $T\in\T_h$ leads to
\begin{equation}\label{5.4.003}
\begin{split}
\sum_{T\in {\cal T}_h} (\nabla_w\cdot ( \varepsilon\textbf{v}),{\cal Q}_h \rho)_T = &
-(\bv_0, \varepsilon\nabla \rho) + \sum_{T\in {\cal T}_h} \langle
( \varepsilon\bv_0-\bv_b)\cdot\bn,
\rho-{\cal Q}_h \rho \rangle_{\partial T}\\
& + \sum_{e\in \E_h\cap \Gamma} \langle \bv_b\cdot\bn,
\rho\rangle_e.
\end{split}
\end{equation}
The third term on the right-hand side of (\ref{5.4.003}) vanishes if
$\rho$ satisfies the boundary condition
(\ref{November.28.100}). Thus, the equation
(\ref{Div-Curl:Feb9:300-new}) holds true from
(\ref{5.4.002}) and (\ref{5.4.003}). This completes the proof of the
lemma.
\end{proof}

\begin{lemma}\label{lemma5.1-2}
Assume that $(\bw; \rho)\in (H_0({\rm div_{\mu}};\Omega) \cap H(\rm
curl; \Omega))\times L_0^2(\Omega)$ is sufficiently smooth on each
element $T\in \T_h$ and satisfies
\begin{eqnarray}
\nabla\times((i\omega \varepsilon+\sigma)^{-1} \nabla \times \bw) +
i \omega \mu \bw+ \mu \nabla \rho &=& \eta,\qquad
 \mbox{in}\ \Omega, \label{5.2-2}\\
(i\omega \varepsilon+\sigma)^{-1} \nabla \times \bw \times \bn &=&
0,\qquad \mbox{on}\ \Gamma. \label{5.2-21}
\end{eqnarray}
Then, we have the following identity:
\begin{equation}\label{Div-Curl:Feb9:300-new-2}
\begin{split}
&\sum_{T\in {\cal T}_h}((i\omega \varepsilon+\sigma)^{-1}\nabla_w \times (Q_h\bw),\nabla_w \times\textbf{v}
)_T+ ( i \omega \mu Q_0 \bw,\bv_0)_T\\
&-(\nabla_w\cdot ( \mu\textbf{v}),{\cal
Q}_h\rho)_T=(\eta,\textbf{v}_0)+l'_\bw
(\textbf{v})-\theta'_\rho(\textbf{v}),
\end{split}
\end{equation}
for all $\textbf{v}\in \bV_{h,0}^2$. Here $l'_\bw(\textbf{v})$ and
$\theta'_\rho(\textbf{v})$ are two functionals in the linear space
$ \bV_{h,0}^2$ given by
\begin{align}\label{l-2}
l'_\bw(\textbf{v})&= \sum_{T\in{\cal T}_h}\langle
(\textbf{Q}_h-I) (i\omega \varepsilon+\sigma)^{-1}\nabla \times\bw ,
(\textbf{v}_0-\textbf{v}_b)\times \textbf{n}\rangle_{\partial T},\\
\theta'_\rho(\textbf{v})&=\sum_{T\in{\cal T}_h}\langle \rho- {\cal
Q}_h\rho,
 ( \mu\textbf{v}_0-\textbf{v}_b)\cdot\bn\rangle_{\partial T}.\label{theta-2}
\end{align}
\end{lemma}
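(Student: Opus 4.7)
The plan is to mirror the proof of Lemma \ref{lemma5.1} almost verbatim, swapping the roles of the coefficients $\mu \leftrightarrow (i\omega\varepsilon+\sigma)^{-1}$ on the curl side and $\varepsilon \leftrightarrow \mu$ on the divergence side, and being careful that the \emph{tangential} boundary condition on $\bv$ is now replaced by the \emph{normal} one, i.e.\ $\bv_b\cdot\bn=0$ on $\Gamma$ for $\bv\in\bV_{h,0}^2$. Correspondingly, the hypothesis (\ref{5.2-21}) on $\bw$ will play the role that $\rho|_\Gamma=0$ played in Lemma \ref{lemma5.1}.

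First, I would apply (\ref{2.5new}) with test function $\varphi=(i\omega\varepsilon+\sigma)^{-1}\nabla_w\times(Q_h\bw)$ and use the commutativity identity (\ref{4.5}) to rewrite
$$
((i\omega\varepsilon+\sigma)^{-1}\nabla_w\times(Q_h\bw),\nabla_w\times\bv)_T
= ((i\omega\varepsilon+\sigma)^{-1}\curl\bw,\curl\bv_0)_T + \langle \bQ_h((i\omega\varepsilon+\sigma)^{-1}\curl\bw),(\bv_0-\bv_b)\times\bn\rangle_{\partial T}.
$$
Integration by parts on the volume term, combined with the added $(i\omega\mu Q_0\bw,\bv_0)_T$, produces $(\curl((i\omega\varepsilon+\sigma)^{-1}\curl\bw),\bv_0)_T+(i\omega\mu\bw,\bv_0)_T$ together with the boundary correction $\langle (\bQ_h-I)((i\omega\varepsilon+\sigma)^{-1}\curl\bw),(\bv_0-\bv_b)\times\bn\rangle_{\partial T}$ and a residual term $-\langle (i\omega\varepsilon+\sigma)^{-1}\curl\bw,\bv_b\times\bn\rangle_{\partial T}$, exactly as in (\ref{5.4.001}).

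Next I would apply (\ref{2.2new}) with $\varphi=\mathcal{Q}_h\rho$ (with $\varepsilon$ replaced by $\mu$ throughout) and a second integration by parts, arriving at
$$
(\nabla_w\cdot(\mu\bv),\mathcal{Q}_h\rho)_T = -(\bv_0,\mu\nabla\rho)_T + \langle (\mu\bv_0-\bv_b)\cdot\bn,\mathcal{Q}_h\rho-\rho\rangle_{\partial T} + \langle \bv_b\cdot\bn,\rho\rangle_{\partial T},
$$
in direct analogy with (\ref{Eq:Feb8:501}). Summing the two identities over $T\in\T_h$, interior face contributions involving $\bv_b$ cancel pairwise because $\bv_b$ is single-valued on $\E_h^0$. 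The crucial boundary bookkeeping on $\Gamma$ then goes as follows: the residual curl term contributes $-\langle (i\omega\varepsilon+\sigma)^{-1}\curl\bw,\bv_b\times\bn\rangle$, which equals $\langle \bv_b,(i\omega\varepsilon+\sigma)^{-1}\curl\bw\times\bn\rangle$ up to sign and therefore vanishes by (\ref{5.2-21}); the residual divergence term $\langle\bv_b\cdot\bn,\rho\rangle_e$ vanishes because $\bv_b\cdot\bn=0$ on $\Gamma$ for $\bv\in\bV_{h,0}^2$.

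Finally, substituting the PDE (\ref{5.2-2}) $\curl((i\omega\varepsilon+\sigma)^{-1}\curl\bw)+i\omega\mu\bw+\mu\nabla\rho=\eta$ into the combined identity gives exactly (\ref{Div-Curl:Feb9:300-new-2}) with $l'_\bw$ and $\theta'_\rho$ as defined in (\ref{l-2})--(\ref{theta-2}). There is no real technical obstacle here; the only delicate point is making sure the swap of boundary conditions between the two formulations is accounted for, i.e.\ that the role played by $\rho|_\Gamma=0$ in Lemma \ref{lemma5.1} is now played by (\ref{5.2-21}) and, conversely, the vanishing tangential component of $\bv_b$ is replaced by the vanishing normal component, so that each boundary integral on $\Gamma$ indeed drops out.
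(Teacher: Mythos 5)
Your proposal is correct and follows essentially the same route as the paper's own proof: apply (\ref{2.5new}) and (\ref{4.5}) to the curl term, (\ref{2.2new}) to the divergence term, integrate by parts, sum over elements, and kill the two boundary residuals on $\Gamma$ using (\ref{5.2-21}) and $\bv_b\cdot\bn=0$, respectively. You also correctly identify the one delicate point — the swap of which boundary condition handles which residual relative to Lemma \ref{lemma5.1} — which is exactly how the paper's argument is organized.
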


\begin{proof} Since  $\mu$, $\sigma$, $\omega$ and $\varepsilon$ are piecewise constants on
the domain $\Omega$ with respect to the given finite element
partitions, then from (\ref{2.5new}) with $\varphi= (i\omega
\varepsilon+\sigma)^{-1}\nabla_w \times (Q_h\textbf{w})$ we obtain
\begin{equation}\label{Eq:Feb8:500-2}\nonumber
\begin{split}
  (\nabla_w \times  \textbf{v}, (i\omega \varepsilon+&\sigma)^{-1}\nabla_w \times
(Q_h\textbf{w}))_T =\ (\nabla \times \textbf{v}_0,  (i\omega \varepsilon+\sigma)^{-1}\nabla_w \times (Q_h\textbf{w}))
_T \\
 &  -\langle (\textbf{v}_b-\textbf{v}_0)\times\bn,
 (i\omega \varepsilon+\sigma)^{-1}\nabla_w \times (Q_h\textbf{w})\rangle_{\partial T},
\end{split}
\end{equation}
which, combined with  (\ref{4.5}),  gives rise to
\begin{equation}\label{5.4-2}\nonumber
\begin{split}
 &((i\omega \varepsilon+\sigma)^{-1}\nabla_w \times (Q_h\textbf{w}),\nabla_w \times \textbf{v}
)_T = \\
  & ((i\omega \varepsilon+\sigma)^{-1}\nabla\times \bw, \nabla \times \bv_0)_T+\langle
 \textbf{Q}_h ((i\omega \varepsilon+\sigma)^{-1}\nabla\times \bw), (\bv_0-\bv_b)\times \bn
\rangle_{\partial T}.
\end{split}
\end{equation}
Now applying the integration by parts to the first term on the
right-hand side of the above identity yields
\begin{equation}\label{5.4.001-2}
\begin{split}
&((i\omega \varepsilon+\sigma)^{-1}\nabla_w \times  (Q_h\textbf{w}),\nabla_w \times\textbf{v}
)_T +(i \omega \mu Q_0 \bw, \bv_0)_T\\
 = & (\curl((i\omega \varepsilon+\sigma)^{-1}\curl\bw), \bv_0)_T  - \langle (i\omega \varepsilon+\sigma)^{-1}\curl \bw, \bv_0\times\bn\rangle_\pT\\
 & +\langle \textbf{Q}_h ((i\omega \varepsilon+\sigma)^{-1}\nabla\times \bw), (\bv_0-\bv_b)\times \bn
\rangle_{\partial T}+(i \omega \mu  \bw, \bv_0)_T\\
= & (\curl((i\omega \varepsilon+\sigma)^{-1}\curl\bw), \bv_0)_T - \langle (i\omega \varepsilon+\sigma)^{-1}\curl \bw,
\bv_b\times\bn\rangle_\pT\\
& + \langle (\textbf{Q}_h-I) ((i\omega \varepsilon+\sigma)^{-1}\nabla\times \bw),
(\bv_0-\bv_b)\times \bn \rangle_{\partial T}+(i \omega \mu \bw, \bv_0)_T.
\end{split}
\end{equation}

Using (\ref{2.2new}) with $\varphi={\cal Q}_h \rho$ and the usual
integration by parts, we obtain
\begin{equation}\label{Eq:Feb8:501-2}
\begin{split}
&\ (\nabla_w\cdot( \mu\bv),{\cal Q}_h \rho)_T\\
  = &\ (\nabla \cdot( \mu\bv_0), {\cal Q}_h \rho)_T+\langle
(\bv_b- \mu\bv_0)\cdot\bn,{\cal Q}_h
\rho\rangle_{\partial T}\\
 =&\ (\nabla \cdot ( \mu\bv_0), \rho)_T+\langle
(\bv_b- \mu\bv_0)\cdot\bn,{\cal
Q}_h\rho\rangle_{\partial T}\\
  =&\ -( \mu\bv_0, \nabla \rho)_T+\langle  \mu\bv_0\cdot\bn,
\rho\rangle_{\partial T} +\langle (\bv_b- \mu\bv_0)\cdot\bn,{\cal
Q}_h
\rho\rangle_{\partial T}\\
 =&\ -(\bv_0,  \mu\nabla \rho)_T+\langle
(\bv_b- \mu\bv_0)\cdot\bn,{\cal Q}_h \rho-\rho\rangle_{\partial T}
+\langle \bv_b\cdot\bn, \rho\rangle_{\partial T}.
\end{split}
\end{equation}

Summing (\ref{5.4.001-2}) over all the elements $T\in\T_h$ yields
\begin{equation}\label{5.4.002-2}
\begin{split}
&\sum_{T\in {\cal T}_h}((i\omega \varepsilon+\sigma)^{-1}\nabla_w \times  (Q_h\textbf{w}),\nabla_w \times \textbf{v}
)_T+  (i \omega \mu Q_0 \bw, \bv_0)_T \\
=& \sum_{T\in\T_h} (\curl((i\omega \varepsilon+\sigma)^{-1}\curl\bw), \bv_0)_T  \\
  & + \langle
 (\textbf{Q}_h-I) ((i\omega \varepsilon+\sigma)^{-1}\nabla\times \bw), (\bv_0-\bv_b)\times \bn
\rangle_{\partial T}+(i \omega \mu \bw,  \bv_0)_T,
\end{split}
\end{equation}
where we have used two properties: the first is the cancelation
property for the boundary integrals on interior edges/faces, and the
second is the boundary condition (\ref{5.2-21}). Similarly, summing
(\ref{Eq:Feb8:501-2}) over all the elements $T\in\T_h$, we obtain
\begin{equation}\label{5.4.003-2}
\begin{split}
\sum_{T\in {\cal T}_h}(\nabla_w\cdot ( \mu\textbf{v}),{\cal Q}_h \rho)_T = &
-(\bv_0, \mu\nabla \rho) + \sum_{T\in {\cal T}_h} \langle
( \mu\bv_0-\bv_b)\cdot\bn,
\rho-{\cal Q}_h \rho \rangle_{\partial T}\\
& + \sum_{e\in \E_h\cap \Gamma} \langle \bv_b\cdot\bn,
\rho\rangle_e.
\end{split}
\end{equation}
The third term on the right-hand side of (\ref{5.4.003-2}) vanishes
for $\bv\in \bV_{h,0}^2$. Thus, the equation
(\ref{Div-Curl:Feb9:300-new-2}) holds true from (\ref{5.4.002-2})
and (\ref{5.4.003-2}). This completes the proof of the lemma.
\end{proof}

\begin{theorem} \label{Thm:div-curl:theorem-error-eqns}
Let $(\bu; p)$ be the solution of the problem (\ref{maxwell1}) for
the electric field and $(\bu_h; p_h)$ be its numerical solution
arising from the WG finite element scheme (\ref{3.3})-(\ref{3.4}).
Define the error functions $\textbf{e}_h$ and $\epsilon_h$ by
(\ref{5.1})-(\ref{ph}). Then, $\be_h\in\bV_{h,0}^1$ and the
following error equations hold true:
\begin{align}\label{EQ:div-curl:error-eq-01}
a_1(\textbf{e}_h,\textbf{v})-b_1(\textbf{v},\epsilon_h)&=
\varphi_{\textbf{u},p}(\textbf{v}), \qquad \forall \textbf{v}\in \bV_{h,0}^1,\\
b_1(\textbf{e}_h,q)&=0,\qquad \qquad \ \forall q\in
W_h^1,\label{EQ:div-curl:error-eq-02}
\end{align}
where
\begin{equation}\label{EQ:div-cul:varphi-up}
\varphi_{\textbf{u},p}(\textbf{v})=l_\textbf{u}(\textbf{v})-
\theta_p(\textbf{v})+ s_1(Q_h\textbf{u},\textbf{v}).
\end{equation}
\end{theorem}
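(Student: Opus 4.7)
The plan is to mimic the standard consistency argument for weak Galerkin schemes: feed the exact solution $(\bu, p)$ into the abstract identity provided by Lemma \ref{lemma5.1}, then subtract the WG equations (\ref{3.3})--(\ref{3.4}). First, I need to verify that $\be_h$ actually lives in $\bV_{h,0}^1$. Since $\bu \in H_0(\mbox{curl}; \Omega)$, we have $\bu \times \bn = 0$ on $\Gamma$, hence $\bn \times (\bu \times \bn) = 0$ on $\Gamma$, so by (\ref{EQ:12-31:002}) the boundary piece $\mathds{Q}_b \bu$ reduces to $Q_b(\varepsilon \bu \cdot \bn)\bn$, which is parallel to $\bn$ and thus has vanishing tangential component. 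Since $\bu_h \in \bV_{h,0}^1$, the difference $\be_h = Q_h \bu - \bu_h$ also satisfies $\be_b \times \bn = 0$ on $\Gamma$.

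For the momentum-type error equation (\ref{EQ:div-curl:error-eq-01}), I would apply Lemma \ref{lemma5.1} with $\bw = \bu$, $\rho = p$, and $\eta = -i\omega \bj_e$. The hypothesis (\ref{5.2}) is exactly the strong form (\ref{Maxwell-Equations-New}) corrected by the Lagrange multiplier term $\varepsilon \nabla p$, which arises in the strong reading of (\ref{maxwell1}) after integration by parts against $\bv \in H_0(\mbox{curl}) \cap H(\mbox{div}_\varepsilon)$; the boundary condition $p|_\Gamma = 0$ in (\ref{November.28.100}) is the natural boundary condition associated with this variational setting. Adding $s_1(Q_h\bu, \bv)$ to both sides of the identity (\ref{Div-Curl:Feb9:300-new}) and recognizing the left-hand side as $a_1(Q_h \bu, \bv) - b_1(\bv, \mathcal{Q}_h p)$ yields
\begin{equation*}
a_1(Q_h \bu, \bv) - b_1(\bv, \mathcal{Q}_h p) \;=\; -(i\omega \bj_e, \bv_0) + l_\bu(\bv) - \theta_p(\bv) + s_1(Q_h \bu, \bv).
\end{equation*}
Subtracting the WG equation (\ref{3.3}) from this identity and using $\be_h = Q_h \bu - \bu_h$, $\epsilon_h = \mathcal{Q}_h p - p_h$ produces precisely (\ref{EQ:div-curl:error-eq-01}) with $\varphi_{\bu,p}$ as defined in (\ref{EQ:div-cul:varphi-up}).

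For the divergence-type error equation (\ref{EQ:div-curl:error-eq-02}), I would invoke the commutativity (\ref{4.4}) from Lemma \ref{lemma4.2}: since $\nabla \cdot (\varepsilon \bu) = \rho$ from the continuous problem, we get $\nabla_w \cdot (\varepsilon Q_h \bu) = \mathcal{Q}_h \rho$ on each $T$. Then for any $q \in W_h^1 \subset P_{k-1}(T)$,
\begin{equation*}
b_1(Q_h \bu, q) = \sum_{T\in \T_h} (\mathcal{Q}_h \rho, q)_T = (\rho, q),
\end{equation*}
and subtracting (\ref{3.4}) gives $b_1(\be_h, q) = 0$.

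The only slightly delicate point is the application of Lemma \ref{lemma5.1}, where one must be comfortable invoking the strong form of (\ref{maxwell1}) together with the homogeneous Dirichlet trace of the Lagrange multiplier $p$; beyond this, the remaining work is bookkeeping---splitting $a_1$ into its weak-curl/mass and stabilizer parts, matching the appearance of $\mathcal{Q}_h p$ in the $b_1$ term with (\ref{Div-Curl:Feb9:300-new}), and using that test polynomials in $W_h^1$ absorb the $\mathcal{Q}_h$ projection on $\rho$.
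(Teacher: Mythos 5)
Your proposal is correct and follows essentially the same route as the paper: invoke Lemma \ref{lemma5.1} with $\bw=\bu$, $\rho=p$, $\eta=-i\omega\textbf{j}_e$, add the stabilizer $s_1(Q_h\bu,\bv)$ to identify the left-hand side as $a_1(Q_h\bu,\bv)-b_1(\bv,{\cal Q}_h p)$, subtract (\ref{3.3}), and then use the commutativity (\ref{4.4}) together with (\ref{3.4}) for the divergence equation. Your explicit check that $\be_b\times\bn=0$ on $\Gamma$ (via the tangential part of $\mathds{Q}_b\bu$ vanishing) is a small addition the paper leaves implicit.
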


\begin{proof}
 Let $(\bu; p)$ be the
solution of the model problem (\ref{maxwell1}). It is not hard to see that the
following holds true:
\begin{equation}\nonumber\label{5.2.Nov-28.200}
\begin{split}
  \nabla\times( \mu^{-1} \nabla \times \bu) + (i \omega \sigma -\omega^2 \varepsilon )  \bu + \varepsilon
  \nabla p &  = -i\omega \textbf{j}_e,\quad
 \mbox{in}\ \Omega,\\
  p & =0,\qquad\quad \mbox{on}\ \Gamma.
\end{split}
\end{equation}
It follows from Lemma \ref{lemma5.1} that
\begin{equation*}
\begin{split}
&\sum_{T\in {\cal T}_h}(\mu^{-1}\nabla_w\times(Q_h\textbf{u}),\nabla_w\times
\textbf{v})_T+ ((i \omega \sigma -\omega^2 \varepsilon )  Q_0\bu,\bv_0)_T-(\nabla_w \cdot ( \varepsilon\textbf{v}),{\cal
Q}_hp)_T\\
=&(-i\omega \textbf{j}_e,\textbf{v}_0)
+l_\textbf{u}(\textbf{v})-\theta_p(\textbf{v}),
\end{split}
\end{equation*}
for all $\bv\in  \bV_{h,0}^1$, which gives
\begin{equation}\label{4.13}
a_1(Q_h\textbf{u},\textbf{v})-b_1(\textbf{v},{\cal
Q}_hp)=(-i\omega \textbf{j}_e,\textbf{v}_0)+l_\textbf{u}(\textbf{v})-
\theta_p(\textbf{v})+s_1(Q_h\textbf{u},\textbf{v}).
\end{equation}
Subtracting (\ref{3.3}) from (\ref{4.13}) gives rise to the first
error equation (\ref{EQ:div-curl:error-eq-01}).

Next, from the second equation in (\ref{maxwell1}) and the
commutative relation (\ref{4.4}), we have for any $q\in W_h^1$,
 \begin{equation}\label{4.14}
(\rho,q)=\sum_{T\in {\cal T}_h}(\nabla\cdot ( \varepsilon\textbf{u}),q)_T=\sum_{T\in {\cal T}_h}({\cal Q}_h(\nabla\cdot
( \varepsilon\textbf{u})),q)_T=\sum_{T\in {\cal T}_h}(\nabla_w\cdot( \varepsilon Q_h\textbf{u}),q)_T.
\end{equation}
The difference of (\ref{4.14}) and (\ref{3.4}) yields the
second error equation (\ref{EQ:div-curl:error-eq-02}). This
completes the proof.
\end{proof}

\begin{theorem} \label{Thm:div-curl:theorem-error-eqns-2}
Let $(\bu; p)$ be the solution of the problem (\ref{maxwell2}) for
the magnetic field and $(\bu_h; p_h)$ be its numerical solution
arising from the WG finite element scheme (\ref{3.5})-(\ref{3.6}).
Denote the error functions $\textbf{e}_h$ and $\epsilon_h$ by
(\ref{5.1})-(\ref{ph}). Then, $\be_h\in\bV_{h,0}^2$ and the
following error equations hold true:
\begin{align}\label{EQ:div-curl:error-eq-01-2}
a_2(\textbf{e}_h,\textbf{v})-b_2(\textbf{v},\epsilon_h)&=
\varphi'_{\textbf{u},p}(\textbf{v}), \qquad \forall \textbf{v}\in \bV_{h,0}^2,\\
b_2(\textbf{e}_h,q)&=0,\qquad \qquad \ \forall q\in
W_h^2,\label{EQ:div-curl:error-eq-02-2}
\end{align}
where
\begin{equation}\label{EQ:div-cul:varphi-up-2}
\varphi'_{\textbf{u},p}(\textbf{v})=l'_\textbf{u}(\textbf{v})-
\theta'_p(\textbf{v})+ s_2(Q_h\textbf{u},\textbf{v}).
\end{equation}
\end{theorem}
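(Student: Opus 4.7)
The plan is to mimic the proof of Theorem~\ref{Thm:div-curl:theorem-error-eqns} line by line, substituting the magnetic counterparts at each step. First I would observe that the exact solution $(\bu,p)$ of (\ref{maxwell2}) satisfies the strong form
\begin{equation*}
\nabla\times((i\omega\varepsilon+\sigma)^{-1}\nabla\times\bu)+i\omega\mu\bu+\mu\nabla p=\nabla\times((i\omega\varepsilon+\sigma)^{-1}\textbf{j}_e),\qquad \mbox{in }\Omega,
\end{equation*}
together with the essential and natural boundary conditions $\mu\bu\cdot\bn=0$ and $((i\omega\varepsilon+\sigma)^{-1}\nabla\times\bu)\times\bn=0$ on $\Gamma$. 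Thus $(\bu,p)$ fulfills the hypotheses of Lemma~\ref{lemma5.1-2} with $\bw=\bu$, $\rho=p$, and $\eta=\nabla\times((i\omega\varepsilon+\sigma)^{-1}\textbf{j}_e)$.

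Next, applying Lemma~\ref{lemma5.1-2} and adding $s_2(Q_h\bu,\bv)$ to both sides of the resulting identity recasts the left-hand side as $a_2(Q_h\bu,\bv)-b_2(\bv,{\cal Q}_h p)$, giving
\begin{equation*}
a_2(Q_h\bu,\bv)-b_2(\bv,{\cal Q}_h p)=(\nabla\times((i\omega\varepsilon+\sigma)^{-1}\textbf{j}_e),\bv_0)+l'_\bu(\bv)-\theta'_p(\bv)+s_2(Q_h\bu,\bv),
\end{equation*}
for every $\bv\in\bV_{h,0}^2$. Subtracting the discrete equation (\ref{3.5}) and invoking the definitions (\ref{5.1}), (\ref{ph}), and (\ref{EQ:div-cul:varphi-up-2}) yields directly the first error equation (\ref{EQ:div-curl:error-eq-01-2}).

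For the second error equation, I would use the constraint equation $\nabla\cdot(\mu\bu)=0$ in $\Omega$ from (\ref{maxwell2}). For any $q\in W_h^2$, the commutative identity (\ref{4.4-2}) gives
\begin{equation*}
b_2(Q_h\bu,q)=\sum_{T\in\T_h}(\nabla_w\cdot(\mu Q_h\bu),q)_T=\sum_{T\in\T_h}({\cal Q}_h\nabla\cdot(\mu\bu),q)_T=(\nabla\cdot(\mu\bu),q)=0.
\end{equation*}
Subtracting (\ref{3.6}) (whose right-hand side already vanishes) produces (\ref{EQ:div-curl:error-eq-02-2}).

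It remains to confirm $\be_h\in\bV_{h,0}^2$, i.e., $\be_b\cdot\bn=0$ on $\Gamma$. Since $\bu_h\in\bV_{h,0}^2$ enforces $\bu_b\cdot\bn=0$ on $\Gamma$, and since $(\mathds{Q}_b\bu)\cdot\bn=Q_b(\mu\bu\cdot\bn)$ vanishes on $\Gamma$ by the essential boundary condition $\mu\bu\cdot\bn=0$, both contributions to $\be_b\cdot\bn$ vanish. The argument is a direct parallel of Theorem~\ref{Thm:div-curl:theorem-error-eqns} and I do not anticipate any substantive obstacle; the only point meriting a moment of care is reconciling the absence of a Dirichlet trace for $p\in L_0^2(\Omega)$ with the hypotheses of Lemma~\ref{lemma5.1-2}, but that lemma already accommodates this by eliminating the offending boundary term through $\bv_b\cdot\bn=0$ on $\Gamma$ rather than through a condition on $\rho$.
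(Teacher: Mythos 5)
Your proposal is correct and follows essentially the same route as the paper's own proof: apply Lemma~\ref{lemma5.1-2} to the strong form of (\ref{maxwell2}), add the stabilizer to obtain $a_2(Q_h\bu,\bv)-b_2(\bv,{\cal Q}_hp)$, subtract (\ref{3.5}), and then use the commutativity (\ref{4.4-2}) together with $\nabla\cdot(\mu\bu)=0$ and (\ref{3.6}) for the second equation. Your closing observations on $\be_h\in\bV_{h,0}^2$ and on how the boundary term is eliminated via $\bv_b\cdot\bn=0$ on $\Gamma$ (rather than a trace condition on $p$) are accurate and consistent with the paper's treatment.
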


\begin{proof}
 Let $(\bu; p)$ be the
solution of the model problem (\ref{maxwell2}). It is not hard to see that the
following holds true:
\begin{equation}\nonumber\label{5.2.Nov-28.200-2}
\begin{split}
 \nabla\times((i \omega \varepsilon+\sigma)^{-1} \nabla \times \bu) +
 i \omega \mu \bu + \mu \nabla p  & =
\nabla \times ( i\omega  \varepsilon+\sigma)^{-1} \textbf{j}_e,\qquad
 \mbox{in}\ \Omega,\\
(i \omega \varepsilon+\sigma)^{-1} \nabla \times \bu \times \bn &
=0, \qquad\qquad \mbox{on}\ \Gamma.
\end{split}
\end{equation}
It follows from Lemma \ref{lemma5.1-2} that
\begin{equation*}
\begin{split}
&\sum_{T\in {\cal T}_h}((i \omega \varepsilon+ \sigma)^{-1} \nabla_w\times(Q_h\textbf{u}),\nabla_w\times
\textbf{v})_T+ ( i \omega \mu  Q_0\textbf{u},\bv_0)_T-(\nabla_w \cdot ( \mu\textbf{v}),{\cal
Q}_hp)_T\\
=&(\nabla\times (i \omega \varepsilon+ \sigma)^{-1}  \textbf{j}_e,\textbf{v}_0)
+l'_\textbf{u}(\textbf{v})-\theta'_p(\textbf{v}),
\end{split}
\end{equation*}
for all $\bv\in  \bV_{h,0}^2$, which gives
\begin{equation}\label{4.13-2}
a_2(Q_h\textbf{u},\textbf{v})-b_2(\textbf{v},{\cal
Q}_hp)=(\nabla\times (i \omega \varepsilon+ \sigma)^{-1}  \textbf{j}_e,\textbf{v}_0)+l'_\textbf{u}(\textbf{v})-
\theta'_p(\textbf{v})+s_2(Q_h\textbf{u},\textbf{v}).
\end{equation}
Subtracting (\ref{3.5}) from (\ref{4.13-2}) gives rise to the first
error equation (\ref{EQ:div-curl:error-eq-01-2}).

Next, from the second equation in (\ref{maxwell2}) and the
commutative relation (\ref{4.4-2}), we have for any $q\in W_h^2$,
 \begin{equation}\label{4.14-2}
0=\sum_{T\in {\cal T}_h}(\nabla\cdot ( \mu\textbf{u}),q)_T=\sum_{T\in {\cal T}_h}({\cal Q}_h(\nabla\cdot
( \mu\textbf{u})),q)_T=\sum_{T\in {\cal T}_h}(\nabla_w\cdot( \mu  Q_h\textbf{u}),q)_T.
\end{equation}
The difference of (\ref{4.14-2}) and (\ref{3.6}) yields the
second error equation (\ref{EQ:div-curl:error-eq-02-2}). This
completes the proof.
\end{proof}

\section{Error Analysis} \label{Section:sec8}The goal of this section is to
derive some error estimates for the numerical approximations $\bE_h$
and $\bH_h$ arising from the weak Galerkin algorithms
\ref{algo1}-\ref{algo2} for the time-harmonic Maxwell equations.
Recall that the error functions, denoted by $\be_h$ and
$\epsilon_h$, are defined as the difference of the numerical
approximation and the $L^2$ projection of the exact solution. The
error equations as presented in Theorems
\ref{Thm:div-curl:theorem-error-eqns}-\ref{Thm:div-curl:theorem-error-eqns-2}
play an important role in the convergence analysis.

\subsection{Some technical inequalities}

Assume that the finite element partition ${\cal T}_h$ of $\Omega$ is
shape regular in the sense as detailed in \cite{wy1202}. Let $T\in
{\cal T}_h$ be an element with $e$ as an edge/face. It is known that
the following trace inequality holds true
\begin{equation}\label{A4}
\|\psi\|_e^2\leqC\big(h_T^{-1}\|\psi\|_T^2+h_T\|\nabla
\psi\|_T^2\big),\qquad \forall \ \psi\in H^1(T).
\end{equation}
For polynomial functions, we have the following inverse inequality
\begin{equation}\label{Div-Curl:inverse}
\|\nabla \phi\|_T \leqC h_T^{-1} \|\phi\|_T.
\end{equation}
In particular, by combining (\ref{A4}) with
(\ref{Div-Curl:inverse}), we arrive at
\begin{equation}\label{Aa-trace}
\|\phi\|_e^2\leqC h_T^{-1}\|\phi\|_T^2
\end{equation}
for any polynomial $\phi$ on $T$ with degree no more than a
prescribed number.

\begin{lemma}\label{lemmaA1}  \cite{wy1202} Let $k\ge 1$ be the order of the WG finite elements,
and $1 \leq r \leq k$. Let $\textbf{w}\in [H^{r+1}(\Omega)]^d$,
$\rho \in H^r (\Omega)$, and $0 \leq m \leq 1$. There holds
\begin{align}\label{A1}
\sum_{T\in{\cal
T}_h}h_T^{2m}\|\textbf{w}-Q_0\textbf{w}\|^2_{T,m}&\leqC
h^{2(r+1)}\|\textbf{w}\|^2_{r+1},\\
\sum_{T\in{\cal T}_h}h_T^{2m}\|\nabla\times
\textbf{w}-\textbf{Q}_h(\nabla\times\textbf{w})\|^2_{T,m}&\leqC
h^{2r}\|\textbf{w}\|^2_{r+1},\label{A2}\\
\sum_{T\in{\cal T}_h}h_T^{2m}\|\rho-{\cal Q}_h\rho\|^2_{T,m}&\leqC
h^{2r}\|\rho\|^2_{r}.\label{A3}
\end{align}
\end{lemma}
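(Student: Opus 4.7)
The plan is to reduce all three estimates to a single local approximation property of $L^2$ projections onto polynomial spaces on shape-regular polygonal/polyhedral elements, then sum over the partition. Since $\T_h$ is shape-regular in the sense of \cite{wy1202}, the standard Bramble-Hilbert framework applies element-by-element, with a uniform constant that depends only on the shape-regularity parameters (not on the particular element geometry).

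First I would record the local building block. For any element $T\in\T_h$, any integer $s\ge 0$, and any $\phi\in H^s(T)$, let $\Pi_T$ denote the $L^2$ projection onto $P_{s-1}(T)$. The shape-regular Bramble-Hilbert lemma yields
\begin{equation*}
\|\phi-\Pi_T\phi\|_{T,m}\lesssim h_T^{s-m}\,\|\phi\|_{s,T},\qquad 0\le m\le s,
\end{equation*}
with the hidden constant independent of $h_T$. This estimate extends componentwise to vector-valued projections, so the same bound holds with $\phi$ replaced by a vector field and $\Pi_T$ by the componentwise projection.

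Next I would apply this bound in each of the three situations. For (\ref{A1}), $Q_0$ is the $L^2$ projection onto $[P_k(T)]^d$, and since $1\le r\le k$, the polynomial space contains $[P_r(T)]^d$; with $s=r+1$ applied to $\bw\in[H^{r+1}(T)]^d$ this gives $\|\bw-Q_0\bw\|_{T,m}\lesssim h_T^{r+1-m}\|\bw\|_{r+1,T}$. For (\ref{A2}), $\bQ_h$ projects onto $[P_{k-1}(T)]^d$, and since $\nabla\times\bw\in[H^r(T)]^d$ with $r\le k$, choosing $s=r$ gives $\|\nabla\times\bw-\bQ_h(\nabla\times\bw)\|_{T,m}\lesssim h_T^{r-m}\|\nabla\times\bw\|_{r,T}\lesssim h_T^{r-m}\|\bw\|_{r+1,T}$. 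For (\ref{A3}), ${\cal Q}_h$ projects onto $P_{k-1}(T)$ and $\rho\in H^r(T)$ with $r\le k$; the same argument gives $\|\rho-{\cal Q}_h\rho\|_{T,m}\lesssim h_T^{r-m}\|\rho\|_{r,T}$.

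Finally I would square each local estimate, multiply by $h_T^{2m}$ (which exactly cancels the $h_T^{-2m}$ coming from the squared exponent $h_T^{2(r+1-m)}$ or $h_T^{2(r-m)}$), replace $h_T$ by the global mesh size $h$ where it is detached from the seminorm factor, and sum over $T\in\T_h$. The sum of the squared Sobolev seminorms on each $T$ is bounded by the corresponding global squared norm, producing (\ref{A1})--(\ref{A3}). The only nontrivial point is the Bramble-Hilbert step on a general polygonal/polyhedral element; this is where the shape-regularity hypothesis of \cite{wy1202} is essential, but since the reference provides precisely this estimate there is no genuine obstacle and I would simply invoke it.
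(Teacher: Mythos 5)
The paper offers no proof of this lemma --- it is quoted directly from \cite{wy1202} --- and your argument is precisely the standard one used there: local Bramble--Hilbert/projection estimates on shape-regular polytopal elements, with the factor $h_T^{2m}$ absorbing the loss from the $H^m$-seminorm, followed by summation over $\T_h$. Your proposal is correct; the only point deserving one extra line is the $m=1$ case, where the $H^1$-bound for the $L^2$ projection on a general polytope is not automatic and is obtained by combining the $L^2$ estimate with the inverse inequality \eqref{Div-Curl:inverse}, which is exactly what the shape-regularity hypotheses of \cite{wy1202} are designed to supply.
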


For convenience, we introduce two semi-norms in the WG finite element
space $\bV_h$; i.e.,
\begin{equation*}\label{Norm-v1h}
\begin{split}
|\bv|_{1,h} = &\left(\sum_{T\in\T_h}
h_T^{-1}\|(\bv_0-\bv_b)\times\bn\|_{\pT}^2 +
h_T^{-1}\|( \varepsilon\bv_0-\bv_b)\cdot\bn\|_{\pT}^2\right)^{\frac12},\\
|\bv|_{2,h} =& \left(\sum_{T\in\T_h}
h_T^{-1}\|(\bv_0-\bv_b)\times\bn\|_{\pT}^2 +
h_T^{-1}\|( \mu\bv_0-\bv_b)\cdot\bn\|_{\pT}^2\right)^{\frac12}.\\
\end{split}
\end{equation*}

\begin{lemma}\label{Lemma:Div-Curl:phi-estimate}\cite{cwang1}
Assume that the finite element partition ${\cal T}_h$ of $\Omega$ is
shape regular and $1\leq r \leq k$. Let $\bw \in  [H^{r+1}
(\Omega)]^d$ and $p \in H^r (\Omega)$. Then, we have
\begin{align*}
|s_1(Q_h\bw,\bv)| &\leqC h^r\|\bw\|_{r+1} \ |\bv|_{1,h},\\
|s_2(Q_h\bw,\bv)| &\leqC h^r\|\bw\|_{r+1} \ |\bv|_{2,h},\\
|l_\bw(\bv)| & \leqC h^r\|\bw\|_{r+1}\ |\bv|_{1,h},\\
|l'_\bw(\bv)| & \leqC h^r\|\bw\|_{r+1}\ |\bv|_{2,h},\\
|\theta_p(\bv)| & \leqC h^r\|p\|_r\ |\bv|_{1,h},\\
|\theta'_p(\bv)| & \leqC h^r\|p\|_r\ |\bv|_{2,h},
\end{align*}
for any $\bv \in \bV_h$. Here, $l_\bw(\cdot)$, $\theta_p(\cdot)$ and
$l'_\bw(\cdot)$, $\theta'_p(\cdot)$ are defined in
(\ref{l})-(\ref{theta}) and (\ref{l-2})-(\ref{theta-2}),
respectively.
\end{lemma}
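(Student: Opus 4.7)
The plan is to bound each of the six functionals by a Cauchy--Schwarz splitting on the face integrals, with one factor absorbed directly into $|\bv|_{1,h}$ or $|\bv|_{2,h}$ and the other estimated via the trace inequality (\ref{A4}) together with the projection bounds of Lemma \ref{lemmaA1}. In every case the test-function factor is already one of the terms appearing in the target semi-norm (with the appropriate $h_T^{\pm 1/2}$ weighting built in), so the substance of the argument is a uniform bound of order $h^r$ on the remaining factor.

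For the stabilizers $s_1(Q_h\bw,\bv)$ and $s_2(Q_h\bw,\bv)$, I would first unpack the projection $\mathds{Q}_b\bw$ using the definition \eqref{EQ:12-31:002}. Since $\bn$ is constant on each flat face, a direct computation gives $\mathds{Q}_b\bw\cdot\bn = Q_b(\varepsilon\bw\cdot\bn)$ and $\mathds{Q}_b\bw\times\bn = Q_b(\bw\times\bn)$, using that $\bn\times(\bw\times\bn)$ is tangential and that $(\bn\times(\bw\times\bn))\times\bn = \bw\times\bn$. Since $\varepsilon Q_0\bw\cdot\bn|_e$ and $Q_0\bw\times\bn|_e$ are polynomials of degree $\le k$ and hence fixed by $Q_b$, this collapses the stabilizer integrands to
\begin{align*}
(\varepsilon Q_0\bw-\mathds{Q}_b\bw)\cdot\bn &= Q_b\bigl(\varepsilon(Q_0\bw-\bw)\cdot\bn\bigr),\\
(Q_0\bw-\mathds{Q}_b\bw)\times\bn &= Q_b\bigl((Q_0\bw-\bw)\times\bn\bigr).
\end{align*}
Applying Cauchy--Schwarz together with $\|Q_b\phi\|_e\le\|\phi\|_e$, then the trace inequality (\ref{A4}) and the $m=0,1$ cases of (\ref{A1}), bounds $\sum_{T\in\T_h} h_T^{-1}\|\bw-Q_0\bw\|_{\pT}^2$ by $Ch^{2r}\|\bw\|_{r+1}^2$ and yields the claimed estimate for $s_1$. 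The bound for $s_2$ is identical, with $\varepsilon$ replaced by $\mu$ and $|\bv|_{1,h}$ by $|\bv|_{2,h}$.

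For $l_\bw(\bv)$ and $l'_\bw(\bv)$, Cauchy--Schwarz with weights $h_T^{1/2}$ and $h_T^{-1/2}$ isolates the factor $\bigl(\sum_{T\in\T_h} h_T\|(\textbf{Q}_h-I)(\mu^{-1}\nabla\times\bw)\|_{\pT}^2\bigr)^{1/2}$. Because $\mu$ is piecewise constant on $\T_h$, $\mu^{-1}$ commutes with $\textbf{Q}_h$ on each element, and the trace inequality (\ref{A4}) combined with (\ref{A2}) for $m=0,1$ gives the bound $Ch^r\|\bw\|_{r+1}$. The primed version is identical with $(i\omega\varepsilon+\sigma)^{-1}$ in place of $\mu^{-1}$ and $|\bv|_{2,h}$ in place of $|\bv|_{1,h}$. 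The estimates for $\theta_p$ and $\theta'_p$ follow the same Cauchy--Schwarz splitting with the scalar factor $p-\mathcal{Q}_h p$, which via (\ref{A4}) and (\ref{A3}) at $m=0,1$ is controlled by $Ch^r\|p\|_r$.

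The main (and really only) technical obstacle is the algebraic simplification in the stabilizer step: one must exploit the normal/tangential structure of $\mathds{Q}_b$ together with the identities $\bn\cdot\bn=1$ and $\bn\times\bn=0$ to rewrite both the normal and tangential stabilizer integrands as a single $L^2$ projection of the interpolation error $\bw-Q_0\bw$. Once this observation is in place, all six estimates reduce to standard trace-plus-approximation arguments that follow directly from Lemma \ref{lemmaA1}.
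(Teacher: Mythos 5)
The paper does not reproduce a proof of this lemma (it defers to \cite{cwang1}), and your argument is correct and is precisely the standard trace-plus-approximation proof used there: Cauchy--Schwarz with $h_T^{\pm 1/2}$ weights to peel off the $|\bv|_{1,h}$ or $|\bv|_{2,h}$ factor, then (\ref{A4}) together with the $m=0,1$ cases of (\ref{A1})--(\ref{A3}) on the remaining factor. Your key algebraic step for the stabilizers --- that on flat faces $\mathds{Q}_b\bw\cdot\bn = Q_b(\varepsilon\bw\cdot\bn)$ and $\mathds{Q}_b\bw\times\bn = Q_b(\bw\times\bn)$, so that $Q_b$ fixing polynomial traces collapses the integrands to projections of $\bw-Q_0\bw$ --- is exactly the observation needed, and the piecewise-constancy of $\mu$, $\varepsilon$, $\sigma$ justifies commuting the coefficients past $\textbf{Q}_h$ in the $l_\bw$, $l'_\bw$ bounds.
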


\subsection{Error estimates} We are now in a position to present
some error estimates for the weak Galerkin algorithms
\ref{algo1}-\ref{algo2}.

\begin{theorem}\label{Thm:div-curl:theorem-error-estimate}
Assume that $k \geq 1$ is the order of the WG finite elements for
(\ref{3.3})-(\ref{3.4}). Let $(\bE; p)\in [H^{k+1}(\Omega)]^d\times
H^k(\Omega)$ be the solution of the problem (\ref{maxwell1}) and
$(\bE_h; p_h)\in \bV_{h,0}^1 \times W^1_h$ be the WG finite element
solution arising from (\ref{3.3})-(\ref{3.4}). Then, we have the
following estimate
\begin{equation}\label{th1}
\3bar Q_h\textbf{E}-\textbf{E}_h\3bar_{\bV_{h, 0}^1} +\|{\cal
Q}_hp-p_h\|_{W_h^1}\leqC h^k(\|\bE\|_{k+1}+\|p\|_k).
\end{equation}
\end{theorem}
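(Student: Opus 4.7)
\medskip

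The plan is to combine the two error equations of Theorem~\ref{Thm:div-curl:theorem-error-eqns} with the coercivity of $a_1$ and the inf-sup condition of Lemma~\ref{lvq2}, then close with the approximation estimates of Lemma~\ref{Lemma:Div-Curl:phi-estimate}.

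First, I would bound $\3bar\be_h\3bar_{\bV_{h,0}^1}$. Choosing $q=\epsilon_h\in W_h^1$ in \eqref{EQ:div-curl:error-eq-02} gives $b_1(\be_h,\epsilon_h)=0$, so setting $\bv=\be_h$ in \eqref{EQ:div-curl:error-eq-01} yields $a_1(\be_h,\be_h)=\varphi_{\bu,p}(\be_h)$. Applying the coercivity estimate \eqref{EQ:aone-coercivity} produces
\begin{equation*}
C\,\3bar\be_h\3bar_{\bV_{h,0}^1}^2 \le |a_1(\be_h,\be_h)| = |\varphi_{\bu,p}(\be_h)|.
\end{equation*}
By \eqref{EQ:div-cul:varphi-up} and Lemma~\ref{Lemma:Div-Curl:phi-estimate}, each of the three ingredients $l_{\textbf{E}}(\be_h)$, $\theta_p(\be_h)$, $s_1(Q_h\textbf{E},\be_h)$ is controlled by $h^k(\|\textbf{E}\|_{k+1}+\|p\|_k)\,|\be_h|_{1,h}$, and since clearly $|\be_h|_{1,h}\le \3bar\be_h\3bar_{\bV_{h,0}^1}$, dividing through gives the desired bound $\3bar\be_h\3bar_{\bV_{h,0}^1}\lesssim h^k(\|\textbf{E}\|_{k+1}+\|p\|_k)$.

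Next, for the pressure error I would invoke the inf-sup construction of Lemma~\ref{lvq2} with $q=\epsilon_h\in W_h^1$, producing $\bv_{\epsilon_h}\in\bV_{h,0}^1$ satisfying $b_1(\bv_{\epsilon_h},\epsilon_h)=\|\epsilon_h\|_{W_h^1}^2$ together with $\3bar\bv_{\epsilon_h}\3bar_{\bV_{h,0}^1}\lesssim \|\epsilon_h\|_{W_h^1}$. Testing \eqref{EQ:div-curl:error-eq-01} with $\bv=\bv_{\epsilon_h}$ gives
\begin{equation*}
\|\epsilon_h\|_{W_h^1}^2 = b_1(\bv_{\epsilon_h},\epsilon_h) = a_1(\be_h,\bv_{\epsilon_h}) - \varphi_{\bu,p}(\bv_{\epsilon_h}).
\end{equation*}
The first term on the right is estimated by the boundedness of $a_1$ (straightforward from its definition) combined with the previously obtained bound for $\3bar\be_h\3bar_{\bV_{h,0}^1}$, while the second is handled by Lemma~\ref{Lemma:Div-Curl:phi-estimate} as before. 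Both contributions are bounded by $h^k(\|\textbf{E}\|_{k+1}+\|p\|_k)\,\3bar\bv_{\epsilon_h}\3bar_{\bV_{h,0}^1}\lesssim h^k(\|\textbf{E}\|_{k+1}+\|p\|_k)\,\|\epsilon_h\|_{W_h^1}$, so cancelling one factor of $\|\epsilon_h\|_{W_h^1}$ yields $\|\epsilon_h\|_{W_h^1}\lesssim h^k(\|\textbf{E}\|_{k+1}+\|p\|_k)$.

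The main technical subtlety is the complex-valued coercivity: the estimate \eqref{EQ:aone-coercivity} only bounds $|a_1(\bv,\bv)|$, not the real or imaginary parts separately, so I have to be careful that after testing $\bv=\be_h$ the equality $a_1(\be_h,\be_h)=\varphi_{\bu,p}(\be_h)$ gives full control of $\3bar\be_h\3bar_{\bV_{h,0}^1}^2$ via the modulus bound, which it does. A secondary check is that $|\bv|_{1,h}$ really is dominated by $\3bar\bv\3bar_{\bV_{h,0}^1}$ as defined in \eqref{Eq:bar-norm}, which is immediate term-by-term. Apart from these two verifications, the argument is a standard saddle-point energy-plus-inf-sup error analysis, and the same template then transfers verbatim to Algorithm~\ref{algo2} using Theorem~\ref{Thm:div-curl:theorem-error-eqns-2}, Lemma~\ref{lvq2-2}, and the $|\cdot|_{2,h}$ variant.
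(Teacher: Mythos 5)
Your argument is correct and follows essentially the same route as the paper: the energy identity $a_1(\be_h,\be_h)=\varphi_{\bE,p}(\be_h)$ from the error equations, the bound on $\varphi_{\bE,p}$ via Lemma \ref{Lemma:Div-Curl:phi-estimate}, and the inf-sup construction of Lemma \ref{lvq2} for $\epsilon_h$. The only cosmetic difference is that you invoke the coercivity estimate (\ref{EQ:aone-coercivity}) directly, whereas the paper re-derives it inline by separately extracting the real and imaginary parts of $a_1(\be_h,\be_h)$; the content is identical.
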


\begin{proof} Theorem
\ref{Thm:div-curl:theorem-error-eqns} implies that the error
functions $\be_h=Q_h\bE - \bE_h$ and $\epsilon_h={\cal Q}_h p - p_h$
satisfy the error equations
(\ref{EQ:div-curl:error-eq-01})-(\ref{EQ:div-curl:error-eq-02}). By
letting $\bv=\be_h$ in (\ref{EQ:div-curl:error-eq-01}) and then
using (\ref{EQ:div-curl:error-eq-02}) we obtain
\begin{equation}\label{November-29:001}
a_1(\be_h, \be_h) = \varphi_{\bE, p}(\be_h).
\end{equation}
The right-hand side of (\ref{November-29:001})  can be estimated by using Lemma
\ref{Lemma:Div-Curl:phi-estimate} as follows
$$
|\varphi_{\bE,p}(\textbf{\be}_h)|\leqC
h^k(\|\bE\|_{k+1}+\|p\|_k)|\textbf{\be}_h|_{1,h}.
$$
Substituting the above into (\ref{November-29:001}) yields
\begin{equation*}
|a_1(\be_h, \be_h)| \leqC
h^k(\|\bE\|_{k+1}+\|p\|_k)|\textbf{\be}_h|_{1,h},
\end{equation*}
which, together with the coercivity $|\textbf{\be}_h|_{1,h}^2
\lesssim |a_1(\be_h, \be_h)|$, leads to
\begin{equation}\label{November-29:002}
|a_1(\be_h, \be_h)|^{1/2} \leqC h^k(\|\bE\|_{k+1}+\|p\|_k).
\end{equation}
From the imaginary part of $a_1(\be_h, \be_h) $ and (\ref{November-29:002}),  we have
$$
\Big(\sum_{T\in {\cal T}_h} \|\be_0\|^2_T \Big)^{1/2} \leqC
h^k(\|\bE\|_{k+1}+\|p\|_k),
$$
which, combining with the real part of $a_1(\be_h, \be_h) $ and (\ref{November-29:002}), yields
\begin{equation*}
 \begin{split}
 \Big(\sum_{T\in {\cal T}_h}\|\nabla_w \times\be_h\|^2_T+& h_T^{-1}\|(\varepsilon \be_0-\be_b) \cdot \bn\|_{\partial T}^2+h_T^{-1}\|(  \be_0-\be_b)  \times \bn\|_{\partial T}^2 \Big)^{1/2}
 \\ \leqC  & h^k(\|\bE\|_{k+1}+\|p\|_k).
 \end{split}
\end{equation*}
Thus, we have
$$
\3bar\be_h\3bar_{\bV_{h, 0}^1} \leqC h^k(\|\bE\|_{k+1}+\|p\|_k).
$$

The error function $\epsilon_h$ can be estimated by using the {\em
inf-sup} condition derived in Lemma \ref{lvq2}. To this end, from
the equation (\ref{EQ:div-curl:error-eq-01}), we have
\begin{equation}\label{November:30:finenow}
b_1(\bv,\epsilon_h) = -\varphi_{\bE,p}(\bv) + a_1(\be_h,\bv).
\end{equation}
By using Lemma \ref{lvq2} and letting $\bv = \bv_{\epsilon_h}$ in
(\ref{November:30:finenow}) we arrive at
$$
\|\epsilon_h\|_{W_h^1}^2 \leqC |\varphi_{\bE,p}(\bv_{\epsilon_h})| +
|a_1(\be_h,\bv_{\epsilon_h})|.
$$
It now follows from Lemma \ref{Lemma:Div-Curl:phi-estimate} and the
error estimate (\ref{November-29:002}) that
$$
\|\epsilon_h\|_{W_h^1}^2 \leqC h^k (\|\bE\|_{k+1}+\|p\|_k) \3bar
\bv_{\epsilon_h}\3bar_{\bV_{h, 0}^1} ,
$$
which, together with (\ref{6.5}), leads to
$$
\|\epsilon_h\|_{W_h^1} \leqC h^k (\|\bE\|_{k+1}+\|p\|_k).
$$
This completes the proof of the theorem.
\end{proof}

\begin{theorem}\label{Thm:div-curl:theorem-error-estimate-2}
Assume that $k \geq 1$ is the order of the WG finite elements
employed in the scheme (\ref{3.5})-(\ref{3.6}). Let $(\bH; p)\in
[H^{k+1}(\Omega)]^d\times H^k(\Omega)$ be the solution of the
problem (\ref{maxwell2}) and $(\bH_h; p_h)\in \bV_{h,0}^2 \times
W^2_h$ be the WG finite element solution arising from
(\ref{3.5})-(\ref{3.6}). Then, we have
\begin{equation}\label{th1-2}
\3bar Q_h\textbf{H}-\textbf{H}_h\3bar_{\bV_{h, 0}^2} +\|{\cal
Q}_hp-p_h\|_{W_h^2}\leqC h^k(\|\bH\|_{k+1}+\|p\|_k).
\end{equation}
\end{theorem}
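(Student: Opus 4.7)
The plan is to mirror, step by step, the proof of Theorem \ref{Thm:div-curl:theorem-error-estimate}, substituting at each occurrence the magnetic-field counterparts. First I would invoke Theorem \ref{Thm:div-curl:theorem-error-eqns-2} so that the error functions $\be_h = Q_h\bH - \bH_h \in \bV_{h,0}^2$ and $\epsilon_h = {\cal Q}_h p - p_h \in W_h^2$ satisfy the error equations (\ref{EQ:div-curl:error-eq-01-2})--(\ref{EQ:div-curl:error-eq-02-2}). Testing the first with $\bv = \be_h$ and using (\ref{EQ:div-curl:error-eq-02-2}) with $q = \epsilon_h$ to kill the $b_2$ term would reduce matters to the identity $a_2(\be_h,\be_h) = \varphi'_{\bH,p}(\be_h)$.

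Next, I would bound the right-hand side by applying Lemma \ref{Lemma:Div-Curl:phi-estimate} to each of the three pieces of $\varphi'_{\bH,p}$ (namely $l'_\bH$, $\theta'_p$, and $s_2(Q_h\bH,\cdot)$), obtaining $|\varphi'_{\bH,p}(\be_h)| \lesssim h^k(\|\bH\|_{k+1}+\|p\|_k)|\be_h|_{2,h}$. The coercivity bound (\ref{EQ:atwo-coercivity}), together with the trivial inequality $|\be_h|_{2,h} \le \3bar\be_h\3bar_{\bV_{h,0}^2}$, then yields $|\be_h|_{2,h}^2 \lesssim |a_2(\be_h,\be_h)|$. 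Combining these gives $|a_2(\be_h,\be_h)|^{1/2} \lesssim h^k(\|\bH\|_{k+1}+\|p\|_k)$, and a second application of (\ref{EQ:atwo-coercivity}) delivers the bound on $\3bar Q_h\bH-\bH_h\3bar_{\bV_{h,0}^2}$.

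For the pressure error I would invoke the inf-sup Lemma \ref{lvq2-2}: given $\epsilon_h \in W_h^2$, construct $\bv_{\epsilon_h} \in \bV_{h,0}^2$ with $b_2(\bv_{\epsilon_h},\epsilon_h) = \|\epsilon_h\|_{W_h^2}^2$ and $\3bar\bv_{\epsilon_h}\3bar_{\bV_{h,0}^2} \lesssim \|\epsilon_h\|_{W_h^2}$ by (\ref{6.5-2}). Rearranging (\ref{EQ:div-curl:error-eq-01-2}) as $b_2(\bv,\epsilon_h) = a_2(\be_h,\bv) - \varphi'_{\bH,p}(\bv)$ and setting $\bv = \bv_{\epsilon_h}$ reduces $\|\epsilon_h\|_{W_h^2}^2$ to estimating $|\varphi'_{\bH,p}(\bv_{\epsilon_h})|$ (handled by Lemma \ref{Lemma:Div-Curl:phi-estimate}) and $|a_2(\be_h,\bv_{\epsilon_h})|$ (handled by the boundedness of $a_2$ combined with the already-established bound on $\3bar\be_h\3bar_{\bV_{h,0}^2}$). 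Both contribute $h^k(\|\bH\|_{k+1}+\|p\|_k)\,\3bar\bv_{\epsilon_h}\3bar_{\bV_{h,0}^2}$, and cancelling one factor of $\|\epsilon_h\|_{W_h^2}$ via (\ref{6.5-2}) closes the estimate.

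The main obstacle I expect is bookkeeping rather than conceptual: one should check that the proof of (\ref{EQ:atwo-coercivity}) truly controls every term in the triple-bar norm (\ref{Eq:bar-norm-2}), including the $L^2$ contribution $\|\bv_0\|_T$, which for $a_2$ enters only through the imaginary part and is tangled with a negative curl contribution carrying the coefficient $\omega\varepsilon(\sigma^2+\omega^2\varepsilon^2)^{-1}$. One has to combine the real-part estimate on the curl with the imaginary-part identity carefully to isolate $\|\bv_0\|$; in the electric case the analogous split was cleaner because $\omega\sigma$ appears alone on the imaginary side. Once this coercivity chain is in hand the remaining steps are routine variants of the electric-field argument.
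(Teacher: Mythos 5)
Your proposal is correct and follows essentially the same route as the paper: test the error equation with $\be_h$, bound $\varphi'_{\bH,p}(\be_h)$ via Lemma \ref{Lemma:Div-Curl:phi-estimate}, close the loop with the coercivity of $a_2$, and recover the pressure error from the inf-sup construction of Lemma \ref{lvq2-2} together with the boundedness of $a_2$. The only cosmetic difference is that you cite the coercivity estimate (\ref{EQ:atwo-coercivity}) as a black box, whereas the paper re-runs the real/imaginary-part bookkeeping inline to extract the curl, stabilizer, and $\|\be_0\|$ contributions separately --- exactly the subtlety you flagged at the end --- so the two arguments coincide in substance.
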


\begin{proof} From Theorem \ref{Thm:div-curl:theorem-error-eqns-2} we see that the error functions
$\be_h=Q_h\bH - \bH_h$ and $\epsilon_h={\cal Q}_h p - p_h$ satisfy
the error equations
(\ref{EQ:div-curl:error-eq-01-2})-(\ref{EQ:div-curl:error-eq-02-2}).
By setting $\bv=\be_h$ in (\ref{EQ:div-curl:error-eq-01-2}) and then
using (\ref{EQ:div-curl:error-eq-02-2}) we obtain
\begin{equation}\label{November-29:001-2}
a_2(\be_h, \be_h) = \varphi'_{\bH, p}(\be_h).
\end{equation}
The right-hand side  of (\ref{November-29:001-2}) can be handled by
using Lemma \ref{Lemma:Div-Curl:phi-estimate} as follows
$$
|\varphi'_{\bH,p}(\textbf{\be}_h)|\leqC
h^k(\|\bH\|_{k+1}+\|p\|_k)|\textbf{\be}_h|_{2,h}.
$$
Substituting the above estimate into (\ref{November-29:001-2})
yields
\begin{equation*}
|a_2(\be_h, \be_h)| \leqC
h^k(\|\bH\|_{k+1}+\|p\|_k)|\textbf{\be}_h|_{2,h},
\end{equation*}
which, together with the coercivity estimate
$|\textbf{\be}_h|_{2,h}^2 \lesssim |a_2(\be_h, \be_h)|$, leads to
\begin{equation}\label{November-29:002-2}
|a_2(\be_h, \be_h)|^{1/2} \leqC h^k(\|\bH\|_{k+1}+\|p\|_k),
\end{equation}
From the real part of $a_2(\be_h, \be_h) $ and
(\ref{November-29:002-2}), we obtain
\begin{equation}\label{ima1}\begin{split}
 \Big(\sum_{T\in {\cal T}_h}\|\nabla_w\times\be_h\|^2_T \Big)^{1/2} &\leqC  h^k(\|\bH\|_{k+1}+\|p\|_k),\\
\Big(\sum_{T\in {\cal T}_h} h_T^{-1}\|(\mu \be_0-\be_b) \cdot
\bn\|_{\partial T}^2 \Big)^{1/2} &\leqC
h^k(\|\bH\|_{k+1}+\|p\|_k),\\
\Big(\sum_{T\in {\cal T}_h} h_T^{-1}\|( \be_0-\be_b)  \times
\bn\|_{\partial T}^2 \Big)^{1/2} &\leqC h^k(\|\bH\|_{k+1}+\|p\|_k).
\end{split}
\end{equation}
Combining with the imaginary part of $a_2(\be_h, \be_h) $,
(\ref{November-29:002-2}) and (\ref{ima1}) gives rise to
$$
\Big(\sum_{T\in {\cal T}_h} \|\be_0\|^2_T \Big)^{1/2}
 \leqC  h^k(\|\bH\|_{k+1}+\|p\|_k).
$$
Thus,
$$
\3bar\be_h\3bar_{\bV_{h, 0}^2}   \leqC  h^k(\|\bH\|_{k+1}+\|p\|_k).
$$

The error function $\epsilon_h$ can be estimated by using the {\em
inf-sup} condition derived in Lemma \ref{lvq2-2}. To this end, from
the equation (\ref{EQ:div-curl:error-eq-01-2}), we have
\begin{equation}\label{November:30:finenow-2}
b_2(\bv,\epsilon_h) = -\varphi'_{\bH,p}(\bv)+ a_2(\be_h,\bv).
\end{equation}
Using Lemma \ref{lvq2-2} and  letting $\bv = \bv_{\epsilon_h}$ in
(\ref{November:30:finenow-2}) yields
$$
\|\epsilon_h\|_{W_h^2}^2 \leqC |\varphi'_{\bH,p}(\bv_{\epsilon_h})|
+ |a_2(\be_h,\bv_{\epsilon_h})|.
$$
It now follows from Lemma \ref{Lemma:Div-Curl:phi-estimate} and the
error estimate (\ref{November-29:002-2}) that
$$
\|\epsilon_h\|_{W_h^2}^2 \leqC h^k (\|\bH\|_{k+1}+\|p\|_k) \3bar
\bv_{\epsilon_h}\3bar_{\bV_{h, 0}^2},
$$
which, together with (\ref{6.5-2}), leads to
$$
\|\epsilon_h\|_{W_h^2} \leqC h^k (\|\bH\|_{k+1}+\|p\|_k).
$$
This completes the proof of the theorem.
\end{proof}

\subsection{$L^2$-error estimates}
In this subsection, we shall present a $L^2$-error estimate for the
components $\be_0$ and $\be_b$ in the error function $\be_h$ for the
WG algorithms \ref{algo1} and \ref{algo2}. To this end, let us
introduce a $L^2$-like norm for the edge/face component $\bv_b$ in
the weak function $\bv=\{\bv_0; \bv_b\}\in \bV_h$ as follows:
$$
\|\bv_b\|_{\E_h}=\Big(\sum_{T\in {T}_h} h_T \int_{\partial T}
|\bv_b|^2 ds\Big)^{\frac{1}{2}}.
$$

\begin{theorem}\label{THM:L2-Electric}
Let $k\geq 1$ be the order of the WG finite element employed in the
scheme (\ref{3.3})-(\ref{3.4}). Let $(\textbf{E}; p) \in
[H^{k+1}(\Omega)]^d \times H^{k}(\Omega)$ and $(\textbf{E}_h; p_h)
\in \bV_{h,0}^1 \times W^1_h$ be the solutions of the problem
(\ref{maxwell1}) and (\ref{3.3})-(\ref{3.4}), respectively. Then,
the following estimate holds true:
\begin{equation*}
\|Q_0\textbf{E}-\textbf{E}_0\|\leqC
h^{k+1}\big(\|\textbf{E}\|_{k+1}+\|p\|_k\big),
\end{equation*}
\begin{equation*}
\|\mathds{Q}_b\textbf{E}-\textbf{E}_b\|_{\E_h}\leqC
h^{k+1}\big(\|\textbf{E}\|_{k+1}+\|p\|_k\big).
\end{equation*}
\end{theorem}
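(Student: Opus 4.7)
The plan is to follow a classical Aubin--Nitsche duality argument adapted to the weak Galerkin setting, then transfer the resulting $L^2$ bound on $\be_0$ to the edge component $\be_b$ through trace inequalities and the stabilizer. First I would introduce the dual problem: given $\be_0$, seek $(\bpsi,\xi)\in (H_0(\mbox{curl};\Omega)\cap H(\mbox{div}_\varepsilon;\Omega))\times L^2(\Omega)$ satisfying
\begin{equation*}
\nabla\times(\mu^{-1}\nabla\times\bpsi)+\overline{(i\omega\sigma-\omega^2\varepsilon)}\bpsi+\varepsilon\nabla\xi=\be_0,\qquad \nabla\cdot(\varepsilon\bpsi)=0,
\end{equation*}
with $\bpsi\times\bn=0$ and $\xi=0$ on $\Gamma$. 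I would assume the standard $H^2$-regularity estimate $\|\bpsi\|_2+\|\xi\|_1\lesssim\|\be_0\|$ on the domain $\Omega$.

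Next, I would apply Lemma \ref{lemma5.1} to this dual problem (taking $\bw=\bpsi$, $\rho=\xi$, $\eta=\be_0$) and test with $\be_h\in\bV_{h,0}^1$. After adding and subtracting the stabilizer $s_1(Q_h\bpsi,\be_h)$ to reconstitute the bilinear form $a_1$, the identity becomes
\begin{equation*}
a_1(Q_h\bpsi,\be_h)-b_1(\be_h,\mathcal{Q}_h\xi)=\|\be_0\|^2+l_\bpsi(\be_h)-\theta_\xi(\be_h)+s_1(Q_h\bpsi,\be_h).
\end{equation*}
The second error equation \eqref{EQ:div-curl:error-eq-02} kills $b_1(\be_h,\mathcal{Q}_h\xi)$, while the first error equation \eqref{EQ:div-curl:error-eq-01}, with test function $Q_h\bpsi\in\bV_{h,0}^1$, rewrites $a_1(Q_h\bpsi,\be_h)$ as $\varphi_{\bE,p}(Q_h\bpsi)$ (using the symmetry/sesquilinearity of $a_1$ up to complex conjugation). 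Rearranging yields
\begin{equation*}
\|\be_0\|^2=\varphi_{\bE,p}(Q_h\bpsi)-l_\bpsi(\be_h)+\theta_\xi(\be_h)-s_1(Q_h\bpsi,\be_h).
\end{equation*}

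Each term on the right is then estimated by Lemma \ref{Lemma:Div-Curl:phi-estimate}. The pieces $l_\bpsi(\be_h)$, $\theta_\xi(\be_h)$, $s_1(Q_h\bpsi,\be_h)$ each contribute at most $h\,(\|\bpsi\|_2+\|\xi\|_1)\,|\be_h|_{1,h}$, and by Theorem \ref{Thm:div-curl:theorem-error-estimate} we already know $|\be_h|_{1,h}\lesssim h^k(\|\bE\|_{k+1}+\|p\|_k)$. The term $\varphi_{\bE,p}(Q_h\bpsi)=l_\bE(Q_h\bpsi)-\theta_p(Q_h\bpsi)+s_1(Q_h\bE,Q_h\bpsi)$ is handled by applying Lemma \ref{Lemma:Div-Curl:phi-estimate} to the primal data with $r=k$ and bounding $|Q_h\bpsi|_{1,h}\lesssim h\,\|\bpsi\|_2$ (obtained from trace and approximation properties of $Q_h$), producing the same order $h^{k+1}(\|\bE\|_{k+1}+\|p\|_k)\|\bpsi\|_2$. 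Combining with the regularity estimate and dividing by $\|\be_0\|$ gives the desired $L^2$ bound on $\be_0$.

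For the edge estimate, I would decompose $\mathds{Q}_b\bE-\bE_b=(\mathds{Q}_b\bE-\bE_b-Q_0\bE+\bE_0)+(Q_0\bE-\bE_0)$ on each $\partial T$, splitting the first difference into its normal and tangential parts so that the stabilizer terms $\|(\varepsilon\be_0-\be_b)\cdot\bn\|_{\pT}$ and $\|(\be_0-\be_b)\times\bn\|_{\pT}$ can be invoked. The $h_T$-weighted boundary norm of the first part is controlled by $h\cdot|\be_h|_{1,h}^2\lesssim h^{2k+1}$, while the second part is bounded via the trace/inverse inequality $\|\be_0\|_{\pT}^2\lesssim h_T^{-1}\|\be_0\|_T^2$, reducing to the $L^2$ bound just established. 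The main obstacle I anticipate is the careful verification that $Q_h\bpsi$ inherits the required boundary and tangential conditions to belong to $\bV_{h,0}^1$, and keeping the stabilizer bookkeeping consistent so that the extra factor of $h$ (from the $H^2$ regularity of the dual solution) is actually gained and not lost in the triangle-inequality steps.
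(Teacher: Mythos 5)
The paper does not actually prove this theorem: it defers to ``a routine duality argument'' and points the reader to \cite{cwang1}. Your proposal is that duality argument written out, so in approach you coincide with what the paper intends; the overall structure (dual saddle-point problem, Lemma \ref{lemma5.1} applied to the dual solution tested against $\be_h$, the error equations, Lemma \ref{Lemma:Div-Curl:phi-estimate} with the extra power of $h$ coming from $\|\bpsi\|_2+\|\xi\|_1\leqC\|\be_0\|$, then a trace argument for $\be_b$) is the right one. Three bookkeeping points are worth fixing. First, testing \eqref{EQ:div-curl:error-eq-01} with $\bv=Q_h\bpsi$ produces the extra term $-b_1(Q_h\bpsi,\epsilon_h)$, which you silently drop; it does vanish, but only because $\nabla\cdot(\varepsilon\bpsi)=0$ combined with the commutativity \eqref{4.4} gives $\nabla_w\cdot(\varepsilon Q_h\bpsi)={\cal Q}_h(\nabla\cdot(\varepsilon\bpsi))=0$ --- this is precisely why the divergence constraint belongs in the dual problem, so the cancellation should be stated. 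Second, in the edge estimate the conversion between the $h_T$-weighted norm $\|\cdot\|_{\E_h}$ and the $h_T^{-1}$-weighted seminorm $|\cdot|_{1,h}$ costs a factor $h_T^{2}$, not $h_T$: the stabilizer contribution to $\|\mathds{Q}_b\bE-\bE_b\|_{\E_h}^2$ is $\leqC h^2|\be_h|_{1,h}^2\leqC h^{2k+2}$, whereas your stated bound $h|\be_h|_{1,h}^2\leqC h^{2k+1}$ would only give $O(h^{k+1/2})$ and would not close the argument --- the correct factor does yield the claimed rate. Third, the $H^2$-regularity of the dual problem is a genuine additional hypothesis on $\Omega$ (convexity or smoothness) that neither the theorem statement nor the paper makes explicit, and since the paper's forms are bilinear rather than sesquilinear in the complex setting, the adjoint problem and the identity producing $\|\be_0\|^2$ (rather than $\int\be_0\cdot\be_0$) need to be set up consistently; you correctly anticipate both issues but they remain to be carried through.
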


Likewise, for the magnetic field intensity approximation, we have
the following result.

\begin{theorem}\label{THM:L2-Magnetic}
Let $k\geq 1$ be the order of the WG finite elements employed in the
WG scheme (\ref{3.5})-(\ref{3.6}). Let $(\textbf{H}; p) \in
[H^{k+1}(\Omega)]^d \times H^{k}(\Omega)$ and $(\textbf{H}_h; p_h)
\in \bV_{h,0}^2 \times W^2_h$ be the solutions of the problem
(\ref{maxwell2}) and (\ref{3.5})-(\ref{3.6}), respectively. Then the
following $L^2$-error estimates hold true:
\begin{equation*}
\|Q_0\textbf{H}-\textbf{H}_0\|\leqC
h^{k+1}\big(\|\textbf{H}\|_{k+1}+\|p\|_k\big),
\end{equation*}
\begin{equation*}
\|\mathds{Q}_b\textbf{H}-\textbf{H}_b\|_{\E_h}\leqC
h^{k+1}\big(\|\textbf{H}\|_{k+1}+\|p\|_k\big).
\end{equation*}
\end{theorem}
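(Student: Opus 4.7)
The plan is a standard duality (Aubin--Nitsche) argument, adapted to the complex-coefficient magnetic formulation, using the identity in Lemma \ref{lemma5.1-2} as the central tool. Specifically, I would introduce the auxiliary dual problem: find $(\bw;\rho)\in (H(\mbox{curl};\Omega)\cap H_0(\mbox{div}_\mu;\Omega))\times L^2_0(\Omega)$ such that
\begin{equation*}
\nabla\times((i\omega\varepsilon+\sigma)^{-1}\nabla\times\bw)+i\omega\mu\bw+\mu\nabla\rho=\be_0,\quad \nabla\cdot(\mu\bw)=0\ \mbox{ in }\Omega,
\end{equation*}
with the boundary condition $(i\omega\varepsilon+\sigma)^{-1}\nabla\times\bw\times\bn=0$ on $\Gamma$, and assume the regularity estimate $\|\bw\|_2+\|\rho\|_1\lesssim \|\be_0\|$. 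Observe that because the bilinear form $a_2$ is symmetric (the coefficients $\mu,\varepsilon,\sigma$ being real and symmetric, so the imaginary unit only contributes a symmetric multiplicative factor), this dual problem also serves as its own adjoint.

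With $\bv=\be_h$ in Lemma \ref{lemma5.1-2}, I would obtain the identity
\begin{equation*}
a_2(Q_h\bw,\be_h)-b_2(\be_h,\mathcal{Q}_h\rho)=(\be_0,\be_0)+l'_\bw(\be_h)-\theta'_\rho(\be_h)+s_2(Q_h\bw,\be_h).
\end{equation*}
The second error equation \eqref{EQ:div-curl:error-eq-02-2} kills the $b_2(\be_h,\mathcal{Q}_h\rho)$ term (one must check that $\mathcal{Q}_h\rho\in W_h^2$, which follows from $\rho\in L^2_0(\Omega)$). Next, testing the first error equation \eqref{EQ:div-curl:error-eq-01-2} against $\bv=Q_h\bw$ and using the symmetry $a_2(\be_h,Q_h\bw)=a_2(Q_h\bw,\be_h)$ yields
\begin{equation*}
a_2(Q_h\bw,\be_h)-b_2(Q_h\bw,\epsilon_h)=\varphi'_{\bH,p}(Q_h\bw).
\end{equation*}
The crucial observation is that $b_2(Q_h\bw,\epsilon_h)=0$, because the commutative identity \eqref{4.4-2} together with $\nabla\cdot(\mu\bw)=0$ gives $\nabla_w\cdot(\mu Q_h\bw)=\mathcal{Q}_h(\nabla\cdot(\mu\bw))=0$. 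Subtracting the two expressions produces
\begin{equation*}
\|\be_0\|^2=l'_\bH(Q_h\bw)-\theta'_p(Q_h\bw)+s_2(Q_h\bH,Q_h\bw)-l'_\bw(\be_h)+\theta'_\rho(\be_h)-s_2(Q_h\bw,\be_h).
\end{equation*}

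Now I would apply Lemma \ref{Lemma:Div-Curl:phi-estimate} twice: with the primal regularity index $r=k$ on the terms containing $\bH$ and $p$ (contributing $h^k(\|\bH\|_{k+1}+\|p\|_k)|Q_h\bw|_{2,h}$), and with the dual regularity index $r=1$ on the terms containing $\bw$ and $\rho$ (contributing $h(\|\bw\|_2+\|\rho\|_1)|\be_h|_{2,h}$). Using $|Q_h\bw|_{2,h}\lesssim h\|\bw\|_2$ (a standard approximation property of the $L^2$ projection, proved via trace and inverse inequalities), the bound from Theorem \ref{Thm:div-curl:theorem-error-estimate-2} giving $|\be_h|_{2,h}\lesssim h^k(\|\bH\|_{k+1}+\|p\|_k)$, and the regularity estimate $\|\bw\|_2+\|\rho\|_1\lesssim\|\be_0\|$, all six terms collapse to $h^{k+1}(\|\bH\|_{k+1}+\|p\|_k)\|\be_0\|$, which delivers the first inequality after division.

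For the boundary estimate on $\|\mathds{Q}_b\bH-\bH_b\|_{\E_h}$, I would use the triangle inequality $h_T\|\be_b\|_{\pT}^2\lesssim h_T\|\be_0\|_{\pT}^2+h_T\|\be_0-\be_b\|_{\pT}^2$ (adjusting the tangential/normal split by absorbing the piecewise-constant $\mu$). The trace inequality \eqref{Aa-trace} bounds the first piece by $\|\be_0\|_T^2$, controlled by the $L^2$ estimate just obtained, while the second piece is bounded by $h_T^2|\be_h|_{2,h}^2$ and hence by $h^{2(k+1)}(\|\bH\|_{k+1}+\|p\|_k)^2$ via Theorem \ref{Thm:div-curl:theorem-error-estimate-2}. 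The main obstacle in the whole argument is ensuring the complex-coefficient dual problem is well-posed with $H^2\times H^1$ regularity; this is not trivial for general Maxwell problems with heterogeneous media, but under the standing smoothness and positive-definiteness assumptions on $\mu,\varepsilon,\sigma$ it can be taken for granted (and was implicitly used for the electric-field analogue in Theorem \ref{THM:L2-Electric}).
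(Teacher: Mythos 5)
The paper does not actually write out a proof of this theorem: it only states that the result follows from ``a routine duality argument'' and points to \cite{cwang1}. Your proposal is exactly that routine duality argument carried out in full --- dual problem with source $\be_0$, the identity of Lemma \ref{lemma5.1-2} tested with $\be_h$, the two error equations of Theorem \ref{Thm:div-curl:theorem-error-eqns-2}, the commutativity (\ref{4.4-2}) to kill $b_2(Q_h\bw,\epsilon_h)$, and the estimates of Lemma \ref{Lemma:Div-Curl:phi-estimate} applied once with $r=k$ and once with $r=1$ --- so in approach you coincide with what the paper intends, and the bookkeeping of powers of $h$ (including $|Q_h\bw|_{2,h}\lesssim h\|\bw\|_2$ and the triangle-inequality/trace argument for $\|\mathds{Q}_b\bH-\bH_b\|_{\E_h}$) is correct.

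The one step that does not survive scrutiny as written is the symmetry claim $a_2(\be_h,Q_h\bw)=a_2(Q_h\bw,\be_h)$. The forms in this paper must be read as sesquilinear (otherwise the coercivity proofs in Section \ref{Section:sec6}, which extract $(\omega\sigma\bv_0,\bv_0)\ge \omega\sigma_0\|\bv_0\|^2$ and use $s_2(\bv,\bv)\ge 0$, would be false for complex-valued $\bv$), and a sesquilinear form with genuinely complex coefficients $(i\omega\varepsilon+\sigma)^{-1}$ and $i\omega\mu$ satisfies $a_2(\bw,\bv)=\overline{a_2^{*}(\bv,\bw)}$ with $a_2^{*}$ carrying the \emph{conjugated} coefficients $(-i\omega\varepsilon+\sigma)^{-1}$ and $-i\omega\mu$; it is not symmetric, and the problem is not self-adjoint. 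The repair is standard and does not change the structure of your argument: pose the dual problem as the adjoint problem (conjugate the complex coefficients in (\ref{5.2-2})--(\ref{5.2-21})), note that Lemma \ref{lemma5.1-2} holds verbatim for the conjugated coefficients by the same proof, and proceed exactly as you did; all the estimates of Lemma \ref{Lemma:Div-Curl:phi-estimate} are insensitive to the conjugation. You should also state explicitly, rather than take for granted, the $H^2\times H^1$ regularity and a priori bound $\|\bw\|_2+\|\rho\|_1\lesssim\|\be_0\|$ for this adjoint problem, since for heterogeneous $\mu,\varepsilon$ on a general Lipschitz domain this is a nontrivial hypothesis rather than a consequence of the standing assumptions.
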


A proof for Theorems \ref{THM:L2-Electric} and \ref{THM:L2-Magnetic}
can be given by following a routine duality argument readily
available in the finite element method. Readers are referred to
\cite{cwang1} for more details on a model problem that resembles the
time harmonic Maxwell equations.

\end{document}